\newtheorem{thm}{Theorem}[section]
\newtheorem{cor}[thm]{Corollary}
\newtheorem{lem}[thm]{Lemma}
\newtheorem{prop}[thm]{Proposition}
\newtheorem{defn}[thm]{Definition}
\newtheorem{rem}[thm]{Remark}
\newcommand{\abs}[1]{\left\vert#1\right\vert}
\newcommand{\set}[1]{\left\{#1\right\}}
\newcommand{\Real}{\mathbb R}
\newcommand{\pfrac}[2]{\frac{\partial #1}{\partial #2}}
\definecolor{darkgreen}{rgb}{0,0.5,0}
\definecolor{darkred}{rgb}{0.7,0,0}
\title[expansions of ricci flat ALE metrics]{
\bf
on expansions of ricci flat ALE metrics \\
 in harmonic coordinates about the infinity
}
\thanks{Key words and phrases. ALE; Ricci flat; harmonic coordinates; asymptotic expansion.}
\thanks{2000 Mathematics subject classification: 35J62; 41A60; 53C21.}
\author{Youmin Chen}
\address{School of Mathematical Sciences, University of Science and Technology of China}%
\email{chym189@mail.ustc.edu.cn}%
\date{\today~(@\the\time mpm)}
\begin{document}
\maketitle

\begin{abstract}
In this paper, we study the expansions of Ricci flat metrics in harmonic coordinates about the infinity of ALE (asymptotically local Euclidean) manifolds.
\end{abstract}

\section{Introduction}
A smooth complete orientable Riemannian manifold $(M^{n},g)$ $(n \geq 3)$ is called an \textit{Asymptotically Locally Euclidean} (ALE) manifold  of order $ \tau>0$ if for some compact
$K \subset M^{n} $, $M^{n} \setminus K $ consists of
a finite number of components $ E_{1}$,...,$E_{k}$ such that for each $E_{i}$ there exists a finite subgroup $ \Gamma_{i}\subset O(n)$ and a $\mathcal{C}^{\infty} $  diffeomorphism
$\Phi :E_{i} \rightarrow (\mathbb{R}^{n}\setminus B(0;R))/\Gamma_{i}$ such that $\varphi= \Phi^{-1}\circ proj $ satisfies
(where $proj$ is the natural projection of $\mathbb{R}^{n}$ to $\mathbb{R}^{n} /\Gamma_{i}$ )
$$ (\varphi^{\ast}g)_{ij}(z)=\delta_{ij}+O(|z|^{-\tau}),
\ \ \ \ \ \partial_{k}(\varphi^{\ast}g)_{ij}(z)=O(|z|^{-\tau-1}),$$
$$ \frac{|\partial_{k}(\varphi^{\ast}g)_{ij}(z)-\partial_{k}(\varphi^{\ast}g)_{ij}(w)|}
{|z-w|^{\alpha}}=O(min\{|z|,|w|\}^{-\tau-1-\alpha})$$
for $z,w\in \mathbb{R}^{n}\setminus B(0;R)$,
where $\partial_{k}$ denotes $\frac{\partial} {\partial{z_{k}}}$. In the above definition each $E_{i}$ is called an end of $M$, without loss of generality, all the manifolds have only one end in this paper. In the rest of the paper when we say coordinates at the infinity, we mean a diffeomorphism $\Phi$ as above. ALE manifolds play an important role in differential geometry
(e.g., as singularity models in studying convergence of Einstein manifolds with bounded Ricci curvature  \cite{anderson1989ricci,nakajima1988hausdorff,bando1990bubbling}) and in general relativity (e.g., gravitational instantons and the positive mass theorem).

In \cite{BKN} Bando, Kasue and Nakajima proved a deep result (Theorem (1.1) in \cite{BKN}, see also Theorem \ref{mainthmBKN}) which ensures the existence of harmonic coordinates $ (z_{1},\cdots,z_{n})$ at the infinity of a Riemannian manifold $(M,g)$ if it has maximal volume growth and curvature decay a bit faster than quadratic. Further assumed $(M,g)$ has $ L^{\frac{n}{2}}$ integral curvature bounds and is Ricci flat, they showed that the metric at infinity is very close to the standard Euclidean metric (Theorem (1.5) in \cite{BKN}, see also Corollary \ref{maincorBKN}), to be more precise,
\begin{equation}\label{decayorder}
	g_{ij}=\delta_{ij}+O(|z|^{1-n}),
\end{equation}
in the coordinates above,
where $|\cdot|$ is the usual vector norm on $\mathbb{R}^{n}$. (\ref{decayorder}) gives us the best decay order of such metrics, which can be seen as a first step in understanding the regularity. We want to study the full regularity in this paper. In the following we focus on Ricci flat ALE metrics. They have been studied by many authors, the readers can see \cite{ni2003ricci} and the references therein.

Similar regularity problems have been studied by many authors in various cases.
 Well known examples include the behaviour of asymptotically hyperbolic Einstein metrics \cite{graham1999volume,kichenassamy2004conjecture,chrusciel2005boundary,helliwell2008boundary,fefferman2011ambient}, the boundary regularity of minimal graphs in the hyperbolic space \cite{lin1989dirichlet,lin1989asymptotic,han2014boundary}, the boundary behavior of the complex Monge-Amp\`{e}re equation \cite{lee1982boundary}, and regularity for the singular Yamabe problem \cite{mazzeo1991regularity,korevaar1999refined}.

In all the above mentioned examples, the full regularity is obtained by proving an asymptotic expansion. This is exactly what we will do in this paper for Ricci flat ALE metrics.
 But we can not expect nice expansions just as simple as the Taylor expansion since usually there are obstacles to the higher regularity of the solutions.
 For example, in \cite{jian2014optimal,jian2013bernstein} Jian and Wang showed clearly how the singularity of the equation can effect the regularity of the solutions. And it has been known for a long time that logarithmic terms should appear in the expansions \cite{fefferman1976monge,lee1982boundary,mazzeo1991regularity}.

It is unlikely to find a unified approach to them, because every problem has its own particular form of expansion. For instance, whether log terms appear or not are different for different problems. In \cite{yin2016analysis} the author showed that the expansion for the Ricci
flow on a conical surface involves no log terms.
An interesting phenomenon was discovered in the conformally compact Einstein metric context \cite{chrusciel2005boundary} that logarithmic terms appear only when the dimension of the manifold is odd.
Similar results were discovered in \cite{yin2007boundary,han2014boundary}.
  But it is not the case for the expansion of constant mean curvature graphs in the hyperbolic space \cite{han2016boundary} where log terms appear in all dimensions except in the dimension 2. These differences are caused both by the nature of the singularity or degeneracy of the problem and by the nonlinearity of the PDE.

  In general we have no idea about the exact pattern of the expansion for a specific problem in advance, the best one can hope is that the power of logarithmic factor can be controlled in some sense. For an example in \cite{jeffres2016kahler} the authors studied Kahler-Einstein metrics with edge singularities and got an expansion for the potential function of the metric along the given divisor, in which the power of $\log \rho$ is controlled by a positive integer depending on the power of $\rho$ in every single term, where $\rho$ is the distance to the divisor. Later in \cite{yin2016expansion} this dependence was shown to be rather simple.

In order to state the main result of this paper, we introduce some necessary notations. Throughout the whole paper, $n$ denotes the dimension of the ALE manifold under consideration, which will always be greater than or equal to $4$.
The sphere coordinates is the best for our purpose. And in the rest of paper we set $(r,\omega)=(z_{1},\cdots,z_{n})=z,$ where $z$ is the harmonic coordinates above, $r=|z|$ and $\omega\in S^{n-1}$. $\mathcal{C}^{k}(\mathbb{R}^{n}\setminus B_{R})$ denotes the space of all $k$ times differentiable functions defined on $\mathbb{R}^{n}\setminus B_{R}$. For a set $A$ of functions, $\text{SPAN}(A)$ denotes the space of all finite linear combinations generated by $A$.

 For the formal expansion of Ricci flat ALE metrics, we need the following two sets of functions (for $n> 4$ and $n=4$ respectively):
\begin{eqnarray*}\label{nologterm}
	{\mathcal T} &:=&\Big\{r^\sigma G_m; \quad G_m\in \mathcal{H}_{m}(S^{n-1}), \sigma=2j -(n-2)l-k,\nonumber\\
                 & &k\geq l \geq j+1,\quad l,k,m,j,\frac{k-m}{2}\in \mathbb N\cup \set{0}\Big\},
\end{eqnarray*}
and
\begin{eqnarray*}\label{logterm1}
		\widetilde{\mathcal{T}}
                 &:=&\Big\{r^\sigma (\log r)^{i}G_m; \quad G_m\in \mathcal{H}_{m}(S^{3}), \sigma= -2l-k,\nonumber\\
                 & & \quad l\geq 1,k\geq l+i, \quad l,k,i,m ,\frac{k-m}{2} \in\mathbb N\cup \set{0}\Big\},
\end{eqnarray*}
where $\mathcal{H}_{m}(S^{n-1})$ is the linear space of all eigenfunctions of $\triangle_{S^{n-1}}$ corresponding to the eigenvalue $-m(m+n-2)$ and $\triangle_{S^{n-1}}$ is the Laplacian of the unit sphere $S^{n-1}$. We remark that the particular form of $\mathcal{T}$ and $\widetilde{\mathcal T}$ is a consequence of a careful analysis of the structure of the equation satisfied by the metric. It should be justified in Section \ref{sec:set} and Section \ref{sec:str}.

To describe the remainder of our expansion, we need an error term slightly different from the standard analysis text book. More precisely, a function $u$ defined on $\Real^n\setminus B_R$ is said to be $O(-q)$ ($q\geq0$) if and only if
\begin{equation*}
	\abs{r^j D^j u}\leq C(j)r^{-q}\qquad \text{as}\; r\rightarrow\infty,
\end{equation*}
for all $j\in \mathbb N\cup \set{0}$, where $C(j)$ is a positive number depending on $j$. In this paper $f=O(-q)$ means that $f$ is a function as above. Note that there are many functions which belong to $O(r^{-q})$ but are not $O(-q)$. For example $\frac{\sin r}{r}=O(r^{-1})$ since $\frac{\sin r}{r}\leq \frac{1}{r}$, but it is not $O(-1)$. Obviously if $f$ has the form $r^{-q}G_m$, then $f=O(-q)$. And functions of the form of $r^{-q}(\log r)^{i}G_m$ belong to $O(-q+\varepsilon)$ for any $\varepsilon>0.$
With this in mind, we define
\begin{defn} \label{defexpansionintr}
	Given a function $u\in\mathcal{C}^{\infty}(\mathbb{R}^{n}\setminus B_{R})$ and a set $A$, then $u$ is said to have an expansion with terms in $A$ up to order $-q$, $q\in\mathbb{N}$, if for some $1>\epsilon>0$ there is $u_q\in \mbox{SPAN}(A)$ such that
	\begin{equation*}
		u=u_q + O(-q-\varepsilon).
	\end{equation*}
\end{defn}

Now, we are ready to state our main result, which shows that the precise regularity depends on the dimension,
\begin{thm} \label{MAINTHM}
 Suppose that $(M,g)$ is a complete $n-$dimensional ($n\geq 4$) Riemannian manifold satisfying
 \begin{enumerate}[(a)]
	 \item Ricci flat,
	 \item maximum volume growth,
	 \item $L^{n/2}$ norm of the curvature tensor is finite,
 \end{enumerate}
  then

  (1) there are harmonic coordinates about the infinity on $(M,g)$ in which $g_{ij}-\delta_{ij}$ has an asymptotical expansion with terms in $\mathcal{T}$ ($n>4$) or $\widetilde{\mathcal{T}}$ ($n=4$) up to any order in the sense of Definition \ref{defexpansionintr},

  (2) the first $n-1$ terms ( i.e. terms of order from $r^{1-n}$ to  $r^{3-2n}$ ) in the expansion are all harmonic functions on Euclidean space $\mathbb{R}^{n}$.
\end{thm}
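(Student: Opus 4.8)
The plan is to convert the Ricci-flat condition into an elliptic equation in the harmonic coordinates supplied by BKN and then to produce the expansion by an order-by-order iteration in which the inverse Laplacian plays the central role. First I would record that in harmonic coordinates the Ricci tensor takes the form $R_{ij} = -\tfrac12 g^{ab}\partial_a\partial_b g_{ij} + Q_{ij}(g,\partial g)$, where $Q_{ij}$ is a sum of terms quadratic in $\partial g$ with coefficients that are smooth functions of $g$ and $g^{-1}$. Writing $h_{ij} := g_{ij} - \delta_{ij}$ and $g^{ab} = \delta^{ab} + a^{ab}$, the Ricci-flat equation becomes $\Delta h_{ij} = -a^{ab}\partial_a\partial_b h_{ij} - 2Q_{ij}$, where $\Delta$ is the flat Laplacian. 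Expanding $g^{-1} = (\delta+h)^{-1}$ as a Neumann series, the whole right-hand side becomes a (formal) sum of products of $h$, $\partial h$, $\partial^2 h$, and is \emph{at least quadratic} in $h$. By Corollary \ref{maincorBKN} we have the a priori decay $h_{ij} = O(|z|^{1-n})$, and after bootstrapping with interior Schauder estimates on the dyadic annuli $\{|z|\sim 2^k\}$ I would upgrade this to $h_{ij} = O(-(n-1))$ in the strong sense of the remainder notation, so that the right-hand side is $O(-2n)$.

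The heart of the argument is a structural lemma showing that $\mathrm{SPAN}(\mathcal T)$ (for $n>4$) and $\mathrm{SPAN}(\widetilde{\mathcal T})$ (for $n=4$) are closed under the three operations driving the iteration: differentiation, multiplication, and the inverse Laplacian. Closure under differentiation and multiplication reduces to the facts that $\partial_k$ preserves the homogeneity bookkeeping and that a product $G_m G_{m'}$ of spherical harmonics decomposes into $\mathcal H_{m''}$ with $m'' \le m+m'$ and $m'' \equiv m+m'\ (\mathrm{mod}\ 2)$; this parity rule is exactly what the constraint $(k-m)/2\in\mathbb N\cup\set{0}$ is designed to track. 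For the inverse Laplacian I would use the indicial identity $\Delta(r^a G_m) = (a-m)(a+m+n-2)\,r^{a-2}G_m$, so that $\Delta u = r^{\sigma}G_m$ is solved by $u = (\sigma+2-m)^{-1}(\sigma+m+n)^{-1}r^{\sigma+2}G_m$ away from the resonances $\sigma = m-2$ and $\sigma = -m-n$; at a resonance one instead uses $\Delta(r^a(\log r)G_m) = (2a+n-2)r^{a-2}G_m$, which resolves it at the cost of one extra power of $\log r$. The decisive point, and what I expect to be the main obstacle, is the arithmetic of whether the source terms generated from $\mathcal T$ ever land exactly on the resonant locus $\sigma = -m-n$ with matching angular frequency and parity: one must show that for $n>4$ this never happens (so $\mathcal T$ carries no logarithms), whereas for $n=4$ it does happen and the logarithmic powers accumulate only in the controlled way encoded by the inequality $k\ge l+i$ in $\widetilde{\mathcal T}$. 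This is a purely combinatorial check over the parameters $(j,l,k,m,i)$, but it is where the dimensional dichotomy is born and where the precise form of the two sets must be justified.

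With the structural lemma in hand, I would run the induction on the order $q$. Assuming $h_{ij} = u_{ij} + O(-q-\eps)$ with $u_{ij}\in\mathrm{SPAN}(\mathcal T)$ collecting all terms down to order $-q$, I substitute into the right-hand side; by closure under products and differentiation this yields a finite sum of $\mathcal T$-terms plus a genuinely smaller remainder. Solving $\Delta(\,\cdot\,) = (\text{source})$ by the inverse Laplacian produces the next batch of $\mathcal T$-terms, while the remainder is handled by the mapping property of $\Delta$ on weighted Hölder spaces (an isomorphism $r^{-\sigma}\mathcal C^{k+2,\alpha}\to r^{-\sigma-2}\mathcal C^{k,\alpha}$ for $\sigma$ off the indicial set), again upgraded to the strong $O(-\cdot)$ norm by dyadic Schauder estimates. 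Since only finitely many admissible orders lie above any prescribed $-q$, iterating resolves them one at a time and proves statement (1).

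Finally, statement (2) falls out of the order count already visible in the first paragraph. Since the right-hand side is at least quadratic and $h=O(-(n-1))$, the source is $O(-2n)$, so the first inhomogeneous correction appears only at order $r^{2-2n}$. The decaying homogeneous harmonics are $r^{2-n-m}G_m$, occurring at orders $r^{1-n},r^{-n},\dots$; those that decay strictly slower than $r^{2-2n}$ are precisely $m=1,\dots,n-1$, i.e. the $n-1$ terms from $r^{1-n}$ down to $r^{3-2n}$. As no source term can contribute at these orders, each of them satisfies $\Delta(\,\cdot\,)=0$ and is therefore harmonic on $\mathbb R^n$, which is exactly assertion (2); the $n$-th term, at order $r^{2-2n}$, is where the nonlinearity first enters and harmonicity generally fails.
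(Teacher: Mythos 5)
Your proposal follows the paper's proof essentially step for step: the same rewriting of the Ricci-flat system as $\triangle_{\mathbb{R}^n}(g_{ij}-\delta_{ij})=RHS_{ij}$ in the BKN harmonic coordinates, the same starting decay $O(-(n-1))$, the same closure of $\mbox{SPAN}(\mathcal T)$ and $\mbox{SPAN}(\widetilde{\mathcal T})$ under multiplication, differentiation and $\triangle^{-1}$, the same treatment of the remainder by a weighted elliptic estimate (the paper uses Meyers' lemma where you invoke the weighted H\"older isomorphism and dyadic Schauder estimates --- equivalent tools), the same induction on the order, and the same order count for part (2). The one substantive step you state but do not carry out is the one you yourself flag as the main obstacle: verifying that products of $\mathcal T$-terms never hit the resonance $\sigma=-m-n$ when $n>4$, and that for $n=4$ the logarithms accumulate no faster than the constraint $k\geq l+i$ permits. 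Since this verification is precisely where the theorem's dimensional dichotomy is proved (Propositions \ref{pruductpro} and \ref{product4} in the paper), your argument is incomplete without it. For the record, it goes as follows: if $T_i=r^{\sigma_i}G_{m_i}$ with $\sigma_i=2j_i-(n-2)l_i-k_i$, $k_i\geq l_i\geq j_i+1$, $k_i\geq m_i$, then every spherical harmonic $G_{m''}$ occurring in $G_{m_1}G_{m_2}$ has $m''\leq m_1+m_2$, whence
\begin{equation*}
-(\sigma_1+\sigma_2)-m''-n\;\geq\;(n-2)(l_1+l_2)-2(j_1+j_2)+(k_1+k_2)-(m_1+m_2)-n\;\geq\;(n-4)\bigl(j_1+j_2+1\bigr),
\end{equation*}
which is strictly positive for $n\geq 5$, so no resonance occurs and the particular solution $c\,r^{\sigma_1+\sigma_2+2}G_{m''}$ re-enters $\mathcal T$ with $j=j_1+j_2+1$, $l=l_1+l_2\geq j+1$, $k=k_1+k_2\geq l$; for $n=4$ the bound degenerates to $0$ and the resonance occurs exactly when $l_1+l_2=2$ and $k_1+k_2=m''$, producing one extra power of $\log r$, but since multiplication yields $l=l_1+l_2\geq 2$ and $k\geq l+i$, the solution (with $\tilde l=l-1\geq 1$ and log-power at most $i+1\leq k-\tilde l$) remains in $\widetilde{\mathcal T}$. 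With this check supplied, your proof is correct and coincides with the paper's.
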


 From the first part it is obvious that there are no log terms in the expansion of the metrics when $n\geq 5$. But log terms may appear when $n=4$.

To get a better understanding of our expansion, we will perform the Kelvin transformation to the metric coefficients. First we recall the inversion map, defined on $\mathbb{R}^{n}\setminus \set{0}$,
\begin{equation*}
x\mapsto x^{*}=\frac{x}{|x|^{2}}.
\end{equation*}
Suppose $u$ is a function in $\mathcal{C}^{\infty}(\mathbb{R}^{n}\setminus B_{R})$, then the Kelvin transformation
(see Chapter 4 in \cite{bookharmonicfunction}) changes $u$ to be a function $K[u] $ on $B_{\frac{1}{R}}(0)\setminus \set{0} $, which is defined to be $K[u]=|x|^{2-n} u(x^{*})$, for $x\in B_{\frac{1}{R}}\setminus \set{0}$. Under this transformation the Ricci flat ALE metrics behave as follows when the dimension is even and greater than 4,
 \begin{cor}\label{coreven}
 When $n>4$ and $n$ is even,
 $K[g_{ij}-\delta_{ij}]$ can be extended to be a smooth function on $B_{\frac{1}{2R}},$ and its value at the origin $0\in B_{\frac{1}{R}}$ is zero.
\end{cor}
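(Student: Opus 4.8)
The plan is to reduce the statement to understanding how the Kelvin transform acts on the building blocks $r^\sigma G_m$ of $\mathcal{T}$, and then to control the remainder supplied by Theorem \ref{MAINTHM}. First I would record the elementary identity
\[
K[r^\sigma G_m](x) = |x|^{2-n}\,|x^*|^{\sigma}\, G_m\!\left(\tfrac{x^*}{|x^*|}\right) = |x|^{2-n-\sigma}\, G_m\!\left(\tfrac{x}{|x|}\right),
\]
which follows from $|x^*| = |x|^{-1}$ and $x^*/|x^*| = x/|x|$. Thus the Kelvin transform preserves the angular factor $G_m$ and sends the radial exponent $\sigma$ to $\sigma' := 2-n-\sigma$. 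For a term of $\mathcal{T}$, where $\sigma = 2j - (n-2)l - k$, this gives $\sigma' = (n-2)(l-1) + k - 2j$.

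Next I would show that, when $n$ is even, every such $K[r^\sigma G_m] = r^{\sigma'} G_m$ is a homogeneous polynomial on $\mathbb{R}^n$ vanishing at the origin. The constraints $l \geq j+1$ and $n-2\geq 2$ give $(n-2)(l-1) \geq 2(l-1) \geq 2j$, so $\sigma' \geq k \geq 1$ is a positive integer. Moreover $\sigma' - m = (n-2)(l-1) - 2j + (k-m)$ is even, since $(n-2)(l-1)$ is even because $n$ is even, and $k-m$ is even because $\tfrac{k-m}{2}\in\mathbb{N}\cup\set{0}$. Writing $r^{\sigma'} G_m = |x|^{\sigma'-m}\,(r^m G_m)$, where $r^m G_m$ is the homogeneous harmonic polynomial of degree $m$ extending $G_m$ and $|x|^{\sigma'-m} = (|x|^2)^{(\sigma'-m)/2}$ is a polynomial, I conclude that $K[r^\sigma G_m]$ is a homogeneous polynomial of degree $\sigma'\geq 1$, smooth on all of $\mathbb{R}^n$ and zero at the origin. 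It is precisely here that $n$ even is used: for $n$ odd, $\sigma' - m$ can be odd, producing a factor $|x|^{\mathrm{odd}}$ that is merely continuous.

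Finally I would combine this with the expansion. Fixing an arbitrary $k$, choose $q$ large and write $g_{ij}-\delta_{ij} = u_q + R_q$ with $u_q \in \mbox{SPAN}(\mathcal{T})$ and $R_q = O(-q-\varepsilon)$, so that $K[g_{ij}-\delta_{ij}] = K[u_q] + K[R_q]$. The first summand is a polynomial by the previous step, and the point is that $K[R_q]$ is $C^k$ near the origin once $q$ is large. The value at the origin is $0$ because every polynomial term has $\sigma'\geq 1$ and $|K[R_q](x)| \leq C\,|x|^{2-n+q+\varepsilon} \to 0$ as $x\to 0$. Since $k$ is arbitrary and the resulting extensions agree on the punctured ball, $K[g_{ij}-\delta_{ij}]$ extends smoothly across $0$ on $B_{\frac{1}{2R}}$ with value $0$ there.

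The main obstacle is this last step: converting the decay estimate $|r^j D^j R_q| \leq C(j)\,r^{-q-\varepsilon}$ into genuine $C^k$ regularity of $K[R_q]$ at the origin. Concretely one must push the derivative bounds through the inversion $x\mapsto x/|x|^2$ together with the weight $|x|^{2-n}$, so that $\partial^\beta K[R_q](x)$ behaves like $|x|^{q+\varepsilon+2-n-|\beta|}$. Each differentiation costs a factor of order $|x|^{-1}$, but the strong form of the $O(-q-\varepsilon)$ notation — bounds on \emph{all} derivatives, each gaining a power of $r$ — is tailored exactly so that $\partial^\beta K[R_q]$ stays bounded, indeed tends to $0$, for $|\beta|\leq k$ when $q$ is chosen large compared to $k$ and $n$. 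Carrying out this chain-rule bookkeeping (a Faà di Bruno estimate for the composition with $\phi(x)=x/|x|^2$) is the crux; the remainder of the argument is algebra.
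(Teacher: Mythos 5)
Your argument is correct and follows essentially the same route as the paper: you show each Kelvin-transformed term $r^{2-n-\sigma}G_m$ is a polynomial of positive degree because $2-n-\sigma-m$ is a nonnegative even integer when $n$ is even, and you control the remainder via the strong $O(-q-\varepsilon)$ derivative bounds. The only cosmetic differences are that the paper derives the vanishing at the origin from the harmonic expansion of the first $n-1$ terms (its Remark on the leading terms) rather than from positivity of the degrees $\sigma'=2-n-\sigma\geq 1$, and the paper is equally terse about the chain-rule step you flag as the crux, simply asserting $K[O(-q-\varepsilon)]=O(r^{q+2-n+\varepsilon})$.
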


\vskip 0.5cm
Now let's turn to the proof of Theorem \ref{MAINTHM}. In harmonic coordinates, the Ricci flat condition gives
\begin{equation} \label{equricciflat1}
   \triangle_{g} g_{ij}= Q_{ij}(g,\partial g),
\end{equation}
where $Q_{ij}(g,\partial g)$ is a quadratic term of the first derivatives of coefficients of the metric.

  Here, we briefly outline the proof for $n>4$. It is a bootstrapping argument. First we rewrite (\ref{equricciflat1}) in the form
  \begin{equation} \label{formal}
  \triangle_{\mathbb{R}^{n}}g_{ij}=RHS_{ij},
  \end{equation}
  where $ \triangle_{\mathbb{R}^{n}}$ is the Laplace operator on the $n$
  dimensional Euclidean space.
  At the beginning we pretend that $g_{ij}$ and hence its derivatives are finite linear combinations of functions in $\mathcal{T}$, then $RHS_{ij}(g)$ is a linear combination with terms in some set $\mathcal{T}_{rhs} $, which is a subset of $\mathcal{T} $ due to some closedness properties of $\text{SPAN}(\mathcal{T}) $ (see Lemma \ref{finalset5}). The key to the whole argument is that for every term in $\mathcal{T}_{rhs} $, the corresponding Poisson equation can be solved with a solution in $\text{SPAN}(\mathcal{T}) $.  This is not trivial and depends on the nonlinear structure of the right hand side of (\ref{formal}).

  As for the error term, we shall keep track of the error term in computing $RHS_{ij}$ to find out that the order of the error increases by $n+1$. Solving the Poisson equation will reduce the order by $2$, thanks to a result of Meyers \cite{Mey} (see Lemma \ref{corpoissonestimate4}). Combining these together yields an improvement of the regularity of $g_{ij}$, so that the bootstrapping argument works.

 As a result of the above argument, when $n>4$, the structure of (\ref{formal}) enables us to avoid log terms during bootstrapping, but when $n=4$, the log terms appear from solving the Poisson equations. That's why we handle the two cases separately.

The paper is organized as follows. In Section \ref{sec:pre} we recall some basic knowledge about harmonic coordinates and the main results of \cite{BKN}. In Section \ref{sec:set} we verify $\text{SPAN}(\widetilde{\mathcal{T} })$ and $\text{SPAN}(\mathcal{T} )$ are closed under multiplication and differentiation, and in particular closed under matrix inversion in some sense.
In Section \ref{sec:sol} we solve the Poisson equations with particular terms in the right hand side, and give an error estimate
which was used to deal the remainders. In Section \ref{sec:str} we study the structure of the right hand side of (\ref{formal}) which is done to show the solvability of the Poisson equations involved in the bootstrapping argument. Finally in Section \ref{sec:pro} with all the preparations in the previous sections we give the proofs of Theorem \ref{MAINTHM} and Corollary \ref{coreven}.

\section{Preliminaries }\label{sec:pre}

\subsection{ALE manifolds} \label{ale}
In this subsection we recall the main results of \cite{BKN}.
The first is a profound theorem for manifolds with fast curvature decay and maximal volume growth.

 \begin{thm}\label{mainthmBKN} (Theorem (1.1) in \cite{BKN})
 Let $o\in M $. Suppose $(M^{n},g)$ has only one end, $ n\geq 3 $, and satisfies
\begin{equation} \label{curvadecay}
	 |Rm|\leq K\rho^{-(2+\epsilon)}
\end{equation}
for sufficiently large $\rho=d(o,*)$, ( $Rm $ denotes the curvature tensor of $g$ )

\begin{equation} \label{ricdecay}
|Ric|\leq K\rho^{-(2+\eta)} \quad \text{for sufficiently large $\rho$},
\end{equation}
and
 \begin{equation} \label{volume}
	 vol(B(o;t))\geq Vt^{n}
\end{equation}
 for all $t>0$, where $K\geq0, V>0$. Then $(M^{n},g)$ is ALE of order $\mu$ which is a real number depends on $n$, $ \eta$ ( or $ \epsilon$),
 moreover we can take $ \mu=n-1$ if $ \eta>n-1$.
 \end{thm}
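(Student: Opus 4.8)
The plan is to split the argument into a geometric part, which constructs a diffeomorphism of the end onto $(\Real^n\setminus B(0;R))/\Gamma$, and an analytic part, which refines these coordinates to harmonic ones and reads off the decay order $\mu$. I work on the single end, exhaust it by the annuli $A(t,2t)=\set{t\le\rho\le 2t}$, and rescale by $g_t=t^{-2}g$ so that each becomes the fixed annulus $A(1,2)$.

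For the geometric part I would run a blow-down analysis. Under $g_t=t^{-2}g$ the curvature obeys, by (\ref{curvadecay}), $\abs{Rm_{g_t}}=t^{2}\abs{Rm_g}\le Kt^{-\epsilon}\rho_t^{-(2+\epsilon)}\le Kt^{-\epsilon}$ on $A(1,2)$, which tends to $0$ as $t\to\infty$; combined with the maximal volume growth (\ref{volume}), Gromov compactness yields subconvergence of $(M,g_t,o)$ to a metric cone. The decisive point—and the only place where decay strictly faster than quadratic, rather than merely quadratic, is needed—is that this cone is unique. Since the rescaled curvature on $A(1,2)$ is $O(t^{-\epsilon})$, it is summable along the dyadic scales $t=2^{k}$, so the rescaled cross-sections form a Cauchy sequence and the convergence improves to $\mathcal C^{1,\alpha}$. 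A flat cone of maximal volume growth with smooth link must be $\Real^n/\Gamma$ for a finite $\Gamma\subset O(n)$, with link $S^{n-1}/\Gamma$; the $\mathcal C^{1,\alpha}$ convergence then lets me patch the approximate radial-and-spherical charts on consecutive annuli into one global diffeomorphism $\Phi$.

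For the analytic part I would pass to harmonic coordinates. On each rescaled annulus the pointwise curvature bound gives a uniform lower bound on the harmonic radius (Anderson), hence harmonic coordinates with uniform $\mathcal C^{1,\alpha}$ control; in them the metric satisfies the elliptic system $\triangle_g g_{ij}=-2\,Ric_{ij}+Q(g,\partial g)$ with $Q$ quadratic in $\partial g$. Feeding the Ricci decay (\ref{ricdecay}) and the established closeness of $g_{ij}$ to $\delta_{ij}$ into this equation and applying weighted elliptic estimates, I bootstrap the decay of $g_{ij}-\delta_{ij}$, the order being dictated by the decaying homogeneous harmonics $r^{-(n-2)-m}G_m$ on $\Real^n\setminus\set{0}$. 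When $\eta>n-1$ the right-hand side decays faster than the critical rate $\rho^{-(n+1)}$, so the particular solution is $O(-(n-1+\delta))$ and the leading behaviour is harmonic; the would-be order-$(n-2)$ radial mode $r^{2-n}$ in $\sqrt{g}\,g^{ij}$ is excluded by the harmonic-gauge identity $\partial_j(\sqrt{g}\,g^{ij})=0$, whose value on such a mode is a nonzero multiple of $r^{-n}x_i$ and hence forces its coefficient to vanish. This leaves the order-$(n-1)$ mode as the leading term, giving $\mu=n-1$.

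I expect the main obstacle to be the geometric part: establishing uniqueness of the tangent cone and enough regularity of the convergence to build a single global chart rather than a bare Gromov--Hausdorff limit. The summability of the rescaled curvature controls the $\mathcal C^{0}$ drift of the cross-sections, but promoting this to the $\mathcal C^{1,\alpha}$ control that a diffeomorphism requires—and verifying that the limiting link is a genuine space form $S^{n-1}/\Gamma$ and not merely a singular Alexandrov space—forces the harmonic-radius machinery to be interwoven with the blow-down, and this is the technical heart of the argument. By comparison the sharp value $\mu=n-1$, once the harmonic gauge is in hand, is a delicate but essentially computational point.
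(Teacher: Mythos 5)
This theorem is not proved in the paper: it is Theorem (1.1) of \cite{BKN}, quoted verbatim, and the only in-paper commentary is the two-sentence remark immediately following it, namely that (\ref{volume}) together with the curvature decay (\ref{curvadecay}) alone already yield harmonic coordinates at infinity in which the metric is ALE of some small order $\epsilon'<\epsilon$, and that the Ricci decay (\ref{ricdecay}) is then used to improve that order. Your sketch reproduces exactly this division of labor --- a geometric blow-down step to build the chart, followed by an elliptic bootstrap in harmonic coordinates using $\triangle_g g_{ij}=-2\,Ric_{ij}+Q(g,\partial g)$ to upgrade the decay --- so it is consistent with the strategy the paper attributes to \cite{BKN}. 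The individual ingredients you name are also the right ones: summability of the rescaled curvature $O(t^{-\epsilon})$ over dyadic scales is indeed where ``strictly faster than quadratic'' enters and is the source of tangent-cone uniqueness; Anderson's harmonic radius bound supplies the $\mathcal{C}^{1,\alpha}$ control; and the harmonic-gauge identity does kill the entire $c_{ij}r^{2-n}$ mode (the gauge condition forces $c_{ij}=\tfrac12\mathrm{tr}(c)\,\delta_{ij}$ and then $\mathrm{tr}(c)=\tfrac n2\mathrm{tr}(c)$, hence $c=0$ for $n\ge3$), which is what leaves $r^{1-n}G_1$ as the leading term and gives $\mu=n-1$ when $\eta>n-1$.

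That said, treat your text as an outline rather than a proof. The steps you yourself flag as the technical heart --- promoting Gromov--Hausdorff subconvergence to a single global diffeomorphism of the end onto $(\mathbb{R}^{n}\setminus B(0;R))/\Gamma$, verifying that the limiting link is a smooth space form $S^{n-1}/\Gamma$ with $\Gamma$ acting freely rather than a singular limit space, and running the weighted elliptic iteration past the threshold where $Q(g,\partial g)$ becomes subcritical --- are precisely where the bulk of \cite{BKN} is spent, and nothing in the present paper can be used to fill them in; for those details the only available comparison is with the cited source itself.
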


Given (\ref{volume}), the condition (\ref{curvadecay}) alone ensure the existence of harmonic coordinates in which the metric is
ALE of order $\epsilon'$, for some $0<\epsilon'<\epsilon$. Then (\ref{ricdecay}) is used to improve the order in the harmonic coordinates.

The next is a crucial result we need to describe the asymptotical behaviour of Ricci flat ALE metrics.
 \begin{cor}\label{maincorBKN} (Theorem (1.5) in \cite{BKN})
 Let $(M,g)$ be a $n$ dimensional Ricci flat manifold ($n\geq4$) with
 \begin{equation*}
  vol(B(o;t))\geq Vt^{n}
 \quad \text{for some $ o\in M$,$V>0$},
\end{equation*}
and
 \begin{equation} \label{curvaintegralnorm}
	 \int_{M} |Rm|^{\frac{n}{2}}\,dV_{g}<\infty.
\end{equation}
Then $(M,g)$ is ALE of order $n-1$ in some harmonic coordinates $(\mathcal{L};\mathbb{R}^{n}\setminus B_{R})$ (large $R\in\mathbb{R}$) about the infinity.
 \end{cor}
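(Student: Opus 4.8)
The plan is to deduce the corollary from Theorem \ref{mainthmBKN}. Since $(M,g)$ is Ricci flat we have $|Ric|\equiv 0$, so the Ricci decay hypothesis (\ref{ricdecay}) holds trivially for \emph{every} $\eta$; in particular we may take $\eta>n-1$. The volume hypothesis (\ref{volume}) is assumed outright. Hence the only thing left to verify is the pointwise curvature decay (\ref{curvadecay}), namely that $|Rm|\leq K\rho^{-(2+\epsilon)}$ for some fixed $\epsilon>0$ and all sufficiently large $\rho=d(o,\ast)$. Once this is in hand, Theorem \ref{mainthmBKN} applied with $\eta>n-1$ immediately gives that $(M,g)$ is ALE of order $n-1$, together with the harmonic coordinates at infinity that the conclusion requires.

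To manufacture the pointwise decay from the integral bound (\ref{curvaintegralnorm}), I would first record an $\epsilon$-regularity statement for Ricci flat metrics. On a Ricci flat manifold the second Bianchi identity yields an elliptic system of the schematic form $\triangle Rm=Rm\ast Rm$, so $|Rm|$ is a subsolution of a Moser-type differential inequality. Maximal volume growth (\ref{volume}) together with Bishop--Gromov comparison (valid since $Ric\geq 0$) pins the volume ratios between two positive constants, hence gives uniform local volume non-collapsing at all scales and base points. Moser iteration then produces constants $\epsilon_0,C>0$ such that, for every $x$ and scale $s$,
\begin{equation*}
  \int_{B(x,s)}|Rm|^{\frac n2}\,dV_g<\epsilon_0 \quad\Longrightarrow\quad \sup_{B(x,s/2)}|Rm|\leq C\,s^{-2}\Big(\int_{B(x,s)}|Rm|^{\frac n2}\,dV_g\Big)^{\frac 2n}.
\end{equation*}
This is precisely the Ricci flat $\epsilon$-regularity underlying the bubbling theory of \cite{anderson1989ricci,nakajima1988hausdorff,bando1990bubbling}.

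Applying this on the balls $B(x,\rho(x)/2)$ with $\rho(x)$ large is the next step. The finiteness in (\ref{curvaintegralnorm}) forces the tails $\int_{M\setminus B(o,\rho)}|Rm|^{n/2}\,dV_g\to 0$ as $\rho\to\infty$, so for $\rho(x)$ large the smallness hypothesis on $B(x,\rho(x)/2)$ is met, and the estimate above gives $|Rm|(x)\leq C\rho(x)^{-2}\,o(1)$; that is, faster-than-quadratic decay $|Rm|=o(\rho^{-2})$.

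The main obstacle is upgrading this $o(\rho^{-2})$ into the \emph{definite} polynomial rate $\rho^{-(2+\epsilon)}$ demanded by Theorem \ref{mainthmBKN}. For this I would analyze the geometry near infinity: faster-than-quadratic decay together with Euclidean volume growth guarantees that the tangent cone at infinity exists, is unique, and is a Ricci flat metric cone of maximal volume, hence a flat cone $\mathbb{R}^n/\Gamma$ for some finite $\Gamma\subset O(n)$. Because the flat cone is a fixed point of the rescaling and the linearization off it has only a discrete set of indicial roots, coming from the spectral gap of the cross-section Laplacian on $S^{n-1}/\Gamma$, the normalized curvature on the dyadic annuli $A(\rho,2\rho)=B(o,2\rho)\setminus B(o,\rho)$ should decay geometrically, $\int_{A(\rho,2\rho)}|Rm|^{n/2}\,dV_g\leq C\rho^{-\delta}$ for some $\delta>0$. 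Feeding this improved integral decay back into the $\epsilon$-regularity estimate converts it into the pointwise bound $|Rm|\leq K\rho^{-(2+\epsilon)}$, which is (\ref{curvadecay}), and the corollary then follows from Theorem \ref{mainthmBKN}. Establishing this quantitative rate, essentially the polynomial convergence to the flat tangent cone, is the delicate analytic point; the remaining ingredients are a routine combination of Moser iteration and volume comparison.
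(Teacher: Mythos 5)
This corollary is not proved in the paper at all: it is quoted verbatim as Theorem (1.5) of \cite{BKN} and used as a black box, so there is no internal proof to compare against. Judged on its own terms, your reduction is the right opening move and does match the strategy of \cite{BKN}: Ricci flatness makes (\ref{ricdecay}) hold for every $\eta$, in particular $\eta>n-1$, the volume hypothesis (\ref{volume}) is assumed, and Theorem \ref{mainthmBKN} then yields ALE of order $n-1$ \emph{provided} the pointwise decay (\ref{curvadecay}) is available. The $\epsilon$-regularity step is also sound: the Weitzenb\"ock formula gives $\triangle|Rm|\geq -c|Rm|^{2}$ for Ricci flat metrics, Bishop--Gromov plus (\ref{volume}) gives non-collapsing, and Moser iteration on $B(x,\rho(x)/2)$ together with the vanishing of the tails of (\ref{curvaintegralnorm}) yields $|Rm|=o(\rho^{-2})$.

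The genuine gap is the passage from $o(\rho^{-2})$ to the definite rate $K\rho^{-(2+\epsilon)}$, which you explicitly defer. What you sketch --- uniqueness of the tangent cone at infinity, identification with a flat cone $\mathbb{R}^{n}/\Gamma$, and geometric decay of $\int_{A(\rho,2\rho)}|Rm|^{n/2}$ via indicial roots of the linearization --- is itself a package of nontrivial theorems, each at least as hard as the statement being proved, and none is established or precisely cited. It is also not how \cite{BKN} close this gap: they exploit the differential inequality $\triangle|Rm|\geq -c|Rm|^{2}$ directly, noting that $o(\rho^{-2})$ decay turns it into an almost-superharmonicity $\triangle u\geq -\varepsilon\rho^{-2}u$, and then run a barrier/maximum-principle comparison against $\rho^{-\alpha}$ (for which $\triangle\rho^{-\alpha}<0$ when $0<\alpha<n-2$) to extract a polynomial rate; note that this gives $\alpha>2$ only when $n>4$, and the case $n=4$ --- which your corollary explicitly includes --- requires a separate argument. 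As written, your proposal is a correct reduction plus a correct first estimate, but the decisive analytic step is asserted rather than proved, so the argument is incomplete.
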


 To get further regularity results of the metrics we need to restate Corollary \ref{maincorBKN} using the language of weighted H\"{o}lder space.

 \begin{defn}\label{defholder}
 The weighted H\"{o}lder space $ \mathcal{C}^{0,\alpha}_{\delta}$ on $ \mathbb{R}^{n}\setminus B_{R},R\in \mathbb{R}$ with weight $\delta$ is the space of functions with the following norm
 \begin{eqnarray*}
 \|u\|_{\mathcal{C}^{0,\alpha}_{\delta}}&=&\sup_{x\in\mathbb{R}^{n}\setminus B_{R}}
 \Big( r^{-\delta+\alpha}(x)\sup_{4|x-y|\leq r(x)}\frac{u(x)-u(y)}{|x-y|^{\alpha}} \Big)\\
 & &+\sup_{x\in\mathbb{R}^{n}\setminus B_{R}}\{ r^{-\delta}(x)|u(x)|\},\quad x=(r,\omega).
 \end{eqnarray*}
 \end{defn}
 The other H\"{o}lder norms like $ \|\cdot\| _{\mathcal{C}^{k,\alpha}_{\delta}} $ can be defined by
 \begin{equation*}
 \|u\|_{\mathcal{C}^{k,\alpha}_{\delta}}=\sum_{0}^{k}\|D^{j}u\|_{\mathcal{C}^{0,\alpha}_{\delta-j}}.
 \end{equation*}
 Since all functions in this paper are defined on $\mathbb{R}^{n}\setminus B_{R}$ for some large $R\in\mathbb{R}$, sometimes we just write $\mathcal{C}^{k,\alpha}_{\delta}$ for short when there is no confusion.  If $u\in\mathcal{C}^{k,\alpha}_{\delta}(\mathbb{R}^{n}\setminus B_{R})$ for all $k\in\mathbb N\cup \{0\}$, we say $u\in\mathcal{C}^{\infty}_{\delta}(\mathbb{R}^{n}\setminus B_{R}) $ for simplicity. For more about weighted H\"{o}lder space and weighted Sobolev space we refer the readers to \cite{Ba} (page 663-664).

By the remarks $(1.8)$ below Theorem $(1.5)$ in
\cite{BKN} we know that when $n \geq 4$, for $(M,g)$ which meets the requirements of Corollary \ref{maincorBKN}, it holds that
\begin{equation*}
 D^{k}(\varphi^{\ast}g)_{ij}(z)-D^{k}\delta_{ij}=O(|z|^{-(n-1)-k})
 \end{equation*}
 in the harmonic coordinates for all $k\geq 0$. Thus we conclude

 \begin{thm} \label{MAINTHM1}
 Suppose that $(M,g)$ is a complete $n-$dimensional($n\geq 4$) Riemannian manifold satisfying
 \begin{enumerate}[(a)]
	 \item Ricci flat;
	 \item maximum volume growth;
	 \item $L^{n/2}$ norm of the curvature tensor is finite,
 \end{enumerate}
  then there are harmonic coordinates about the infinity on $(M,g)$ in which $g_{ij}-\delta_{ij}$ belongs to $\mathcal{C}^{\infty}_{1-n}(\mathbb{R}^{n}\setminus B_{R})$ ($R$ is large enough).
 \end{thm}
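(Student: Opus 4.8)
The plan is to deduce this statement directly from the two results of \cite{BKN} recalled above, repackaging their pointwise asymptotic estimates into the weighted H\"older norm of Definition \ref{defholder}. First I would apply Corollary \ref{maincorBKN}: conditions (a)--(c) are precisely its hypotheses (Ricci flatness, maximal volume growth, and finiteness of the $L^{n/2}$ curvature norm), so it produces harmonic coordinates $(\mathcal{L};\mathbb{R}^{n}\setminus B_{R})$ about the infinity in which $(M,g)$ is ALE of order $n-1$. From now on I work in these coordinates and, fixing indices $i,j$, write $u:=(\varphi^{\ast}g)_{ij}-\delta_{ij}$; all estimates below are uniform in $i,j$.

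The second ingredient is the derivative refinement quoted from remark $(1.8)$ of \cite{BKN}, namely $D^{k}u(z)=O(|z|^{-(n-1)-k})$ for every $k\geq0$. Equivalently, for each $k$ there is a constant $C_k$ with $|D^{k}u(z)|\leq C_k\, r^{-(n-1)-k}$, where $r=|z|$. This immediately controls the sup-norm part of each weighted norm $\|D^{j}u\|_{\mathcal{C}^{0,\alpha}_{(1-n)-j}}$, since $\sup_x r^{-(1-n)+j}(x)\,|D^{j}u(x)|\leq C_j$.

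The only piece of Definition \ref{defholder} not directly supplied by the pointwise bounds is the weighted H\"older seminorm, and this is where the small amount of genuine work lies; I would control it using one additional derivative. Fix $x,y$ with $4|x-y|\leq r(x)$, so that $r(y)\geq\tfrac34 r(x)$ and $r$ is comparable to $r(x)$ along the entire segment $[x,y]$. By the mean value theorem and the estimate for $k=j+1$,
$$|D^{j}u(x)-D^{j}u(y)|\leq\Big(\sup_{\xi\in[x,y]}|D^{j+1}u(\xi)|\Big)|x-y|\leq C\,r(x)^{-(n-1)-(j+1)}\,|x-y|.$$
Dividing by $|x-y|^{\alpha}$ and using $|x-y|^{1-\alpha}\leq C\,r(x)^{1-\alpha}$ gives $|x-y|^{-\alpha}\,|D^{j}u(x)-D^{j}u(y)|\leq C\,r(x)^{-(n-1)-j-\alpha}$; multiplying by the weight $r(x)^{-((1-n)-j)+\alpha}=r(x)^{(n-1)+j+\alpha}$ leaves a bounded quantity. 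Hence the weighted H\"older seminorm of $D^{j}u$ with weight $(1-n)-j$ is finite for every $j$, and summing over $0\leq j\leq k$ shows $u\in\mathcal{C}^{k,\alpha}_{1-n}$ for every $k$; that is, $u\in\mathcal{C}^{\infty}_{1-n}(\mathbb{R}^{n}\setminus B_{R})$, which is the assertion.

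I do not expect a serious obstacle: the theorem is essentially a translation of the estimates of \cite{BKN} into the weighted-space language used throughout the rest of the paper. The one point that deserves care is the passage from a pointwise derivative bound to a H\"older seminorm bound, which is exactly the mean-value argument above; it relies on the restriction $4|x-y|\leq r(x)$ built into Definition \ref{defholder} to keep $r(y)$ comparable to $r(x)$, so that the derivative estimate can be applied uniformly along $[x,y]$.
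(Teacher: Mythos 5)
Your proposal is correct and follows essentially the same route as the paper: invoke Corollary \ref{maincorBKN} for the harmonic coordinates and ALE order $n-1$, then use the derivative estimates from remark $(1.8)$ of \cite{BKN} to conclude membership in $\mathcal{C}^{\infty}_{1-n}$. The only difference is that you spell out the mean-value argument converting the pointwise derivative bounds into the weighted H\"older seminorm bounds, a step the paper leaves implicit with ``Thus we conclude.''
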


\subsection{Harmonic coordinates}
In this subsection we state some basic facts about harmonic coordinates:

($a$) Let $g_{ij}$ be the coefficients of the metric $g$ of $M$ in terms of harmonic coordinates, then we have
  $$\sum_{i,j} g^{ij} \Gamma^{k}_{ij}=0$$
  and
 \begin{equation}
	 \triangle_{g}u=\sum_{i,j}g^{ij}\partial_{i}\partial_{j}u.
   \label{eqn:laplace oporater}
\end{equation}

($b$) Most importantly, in harmonic coordinates, the coefficients of $g$ are subject to an elliptic system
$$\triangle_{g} g_{ij}=-2 Ric_{ij} + Q_{ij}(g,\partial g),$$
  where
  \begin{equation*}
	Q_{ij}(g,\partial g)=-\sum_{k,l,p,q} g^{pq}g_{lj}\partial_{p} g_{ik}\partial_{q} g^{kl}
      -\sum_{k,l,p,q,r,s} 2g_{ik}g_{lj} g^{pq}g^{rs}\Gamma^{k}_{pr}\Gamma^{l}_{qs}.
	\label{eqn:nonliear term}
\end{equation*}
In particular if $Ric_{ij}=0$, then the equation for Ricci flat ALE  manifolds in the above harmonic coordinates $(\mathcal{L};\mathbb{R}^{n}\setminus B_{R})$ becomes
\begin{equation} \label{equricciflat}
   \triangle_{g} g_{ij}= Q_{ij}(g,\partial g).
\end{equation}
The readers can find more knowledge about harmonic coordinates and elliptic estimate theory in \cite{BKN} and \cite{Ba}.

\section{The Sets $\mathcal{T}$ and $\widetilde{\mathcal{T}}$}\label{sec:set}

In this section, we define and study the function spaces $\mathcal T$ and $\widetilde{\mathcal T}$ step by step. They are used to formulate the expansion in our main theorem and indeed, the regularity property of $g_{ij}$ is encoded in their definition.

\subsection{Expansion of bounded harmonic functions}
Now let us start from the expansion of bounded harmonic functions on $\mathbb{R}^{n}$.

Letting $(r,\omega)$ be the polar coordinates, we compute
   \begin{equation}\label{polarharmonic}
   \triangle_{\mathbb{R}^{n}}H(x)=r^{-(n-1)}\partial_{r}(r^{(n-1)}\partial_{r}H)
  +r^{-2}\triangle_{S^{n-1}}H.
   \end{equation}
  If $H(x)=r^{\sigma}G(\omega)$ for some $\sigma\in \mathbb{R}$, then
  \begin{equation*}
    \triangle_{\mathbb{R}^{n}}H(x)=r^{\sigma-2}(\triangle_{S^{n-1}}G+\sigma(\sigma+n-2)G).
    \end{equation*}
    So $H$ is harmonic on $\mathbb{R}^{n}\setminus \{0\}$ if and  only if $G$ is an eigenfunction of $\triangle_{S^{n-1}}$ with eigenvalue $-\sigma(\sigma+n-2)$. Since the eigenvalue is nonpositive, the possible values of $\sigma$ are $0,1,\cdots$ and $2-n,1-n,-n,\cdots$. Boundedness of $H$ suggests the following lemma, whose proof is well-known and can be found
    in \cite{bookharmonicfunction}.

  \begin{lem}\label{lem:harmonic expansion}
	Let $H$ be a bounded harmonic function on $\mathbb{R}^{n}\setminus B_{2}$, then there exist constants $c_k$ and $c$ such that
	\begin{equation*}
		H(x)=\sum^{\infty}_{k=0}c_{k}r^{-k-(n-2)}G_{k} +c,
	\end{equation*}
	where $G_{k}$ is an eigenfunction of $\triangle_{S^{n-1}}$ corresponding to the eigenvalue $-k(k+n-2)$.
\end{lem}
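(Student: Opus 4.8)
The plan is to separate variables, reduce to a family of radial ordinary differential equations, and then use the boundedness hypothesis to discard the modes that grow at infinity; the convergence of the resulting series is the only genuinely analytic point, and I would dispatch it via the Kelvin transform. Since $H$ is harmonic it is smooth on $\R^n\setminus B_2$, so for each fixed $r>2$ I may expand the restriction $H(r,\cdot)$ in an orthonormal basis $\set{Y_{k,\ell}}$ of $L^2(S^{n-1})$ made of eigenfunctions of $\triangle_{S^{n-1}}$, with $\triangle_{S^{n-1}}Y_{k,\ell}=-k(k+n-2)Y_{k,\ell}$. Writing $a_{k,\ell}(r)=\int_{S^{n-1}}H(r,\omega)Y_{k,\ell}(\omega)\,d\omega$, I would differentiate under the integral sign, insert the polar form \eqref{polarharmonic} of the Laplacian, and use self-adjointness of $\triangle_{S^{n-1}}$ together with $\triangle_{\R^n}H=0$ to obtain, mode by mode, the Euler-type equation
\[
r^{-(n-1)}\big(r^{n-1}a_{k,\ell}'(r)\big)' - r^{-2}\,k(k+n-2)\,a_{k,\ell}(r)=0 .
\]

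Solving this ODE gives $a_{k,\ell}(r)=A_{k,\ell}r^{k}+B_{k,\ell}r^{-k-(n-2)}$ for $k\geq 1$, and $a_{0,\ell}(r)=A_{0,\ell}+B_{0,\ell}r^{-(n-2)}$ for $k=0$; these exponents are exactly the two roots $\sigma$ of $\sigma(\sigma+n-2)=k(k+n-2)$ already identified in the discussion preceding the lemma. This is where boundedness enters: by Cauchy--Schwarz, $\abs{a_{k,\ell}(r)}\leq \norm{H}_{L^\infty}\,\abs{S^{n-1}}^{1/2}$ uniformly in $r$, so each $a_{k,\ell}$ stays bounded on $(2,\infty)$. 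Since $r^{k}\to\infty$ for $k\geq 1$, this forces $A_{k,\ell}=0$ for every $k\geq 1$, leaving only the constant mode $A_{0,\ell}$ (which assembles into the constant $c$) and the decaying terms. Collecting $G_{k}:=\sum_{\ell}B_{k,\ell}Y_{k,\ell}$, which is by construction an eigenfunction for the eigenvalue $-k(k+n-2)$, produces the asserted form $H=\sum_{k}c_{k}r^{-k-(n-2)}G_{k}+c$.

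The hard part is not this algebra but the analytic justification that the formal series actually represents $H$ in $\mathcal{C}^\infty$ and may be differentiated term by term. I would settle this with the Kelvin transform $K[H](y)=\abs{y}^{2-n}H(y/\abs{y}^{2})$, which is harmonic on the punctured ball $B_{1/2}\setminus\set{0}$ and, from $\abs{H}\leq\norm{H}_{L^\infty}$, satisfies $\abs{K[H](y)}\leq \norm{H}_{L^\infty}\,\abs{y}^{2-n}$. The classical Laurent-type expansion for harmonic functions on a punctured ball writes $K[H]$ as a regular part $\sum_{k\geq0}P_{k}$ (homogeneous harmonic polynomials) plus a singular part $\sum_{k\geq0}\abs{y}^{-k-(n-2)}(\cdots)$; the growth bound $O(\abs{y}^{2-n})$ kills every singular term of order $k\geq1$, leaving $K[H](y)=a\,\abs{y}^{2-n}+v(y)$ with $v$ harmonic on all of $B_{1/2}$. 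The interior expansion of $v$ into homogeneous harmonic polynomials converges uniformly on compact sets, and since the Kelvin transform is an involution one computes directly that $a\abs{y}^{2-n}$ maps back to the constant $c=a$ while the degree-$k$ piece $P_k$ maps to $r^{-k-(n-2)}G_k$ with $G_k=P_k|_{S^{n-1}}$, giving the convergent expansion with the required smoothness. As this classification of isolated singularities and the annular expansion are standard — indeed this is why \cite{bookharmonicfunction} is cited — I would present the ODE computation and the boundedness argument in full and merely invoke the Kelvin-transform machinery for convergence.
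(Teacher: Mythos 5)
Your proof is correct, and since the paper offers no argument of its own for this lemma --- it simply defers to \cite{bookharmonicfunction} --- your route (separation into spherical harmonics, the Euler ODE with roots $k$ and $-k-(n-2)$, boundedness killing the growing modes, and the Kelvin transform plus the classification of isolated singularities to justify convergence) is precisely the standard argument from that reference. No gaps; the only point worth being explicit about in a written version is that the removable-singularity step for the regular part of $K[H]$ after subtracting $a|y|^{2-n}$ is what upgrades the formal mode-by-mode identity to a locally uniformly convergent expansion.
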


   Motivated by this lemma, we define
\begin{equation*}
\mathcal{T}_{1}:= \{r^{-n-k+2}G_{k}; \quad k\in \mathbb{N}, G_{k}\in \mathcal{H}_{k}(S^{n-1})\}.
\end{equation*}
Recall that $\mathcal H_k(S^{n-1})$ is the space of eigenfunctions corresponding to the eigenvalue $-k(k+n-2)$. Notice that $\mathcal T_1$ is almost a basis of bounded harmonic function and we have removed the constant function and the function $r^{2-n}$ from it, because these are prepared for the expansion of $g_{ij}-\delta_{ij}$, which know (from the work of Bando, Kasue and Nakajima) that it decays at order $n-1$.

\subsection{Closedness under multiplication and differentiation}
For the proof in this subsection, we need to introduce more notations and facts concerning the spherical harmonics.
  Let $\mathcal{P}_{k}(n)$ denote the space of homogeneous polynomials of degree $k$ in $n$ variables with real coefficients  and let
$\mathcal{P}_{k}(S^{n-1})$  denote the restrictions of homogeneous polynomials in $\mathcal{P}_{k}(n)$ to
$S^{n-1}$.
Set $\mathcal{H}_{k}(n)$
to be the space of real harmonic homogeneous polynomials of degree $k$. We have a
linear isomorphism between $\mathcal{H}_{k}(n)$ and $\mathcal{H}_{k}(S^{n-1})$ by restricting functions on $\mathbb{R}^{n}$ to
$S^{n-1}$.

	Now we can define
 $$\mathcal{T}_{2}:=\{r^{(2-n)l-k}G_{m};\quad k\geq l\geq 1, k, l,m ,
 \frac{k-m}{2}\in \mathbb{N}\cup\{0\},G_{m}\in \mathcal{H}_{m}(S^{n-1})\}.$$
\begin{rem}  It is possible that $r^{(2-n)l_{1}-k_{1}}G_{m}=r^{(2-n)l_{2}-k_{2}}G_{m}$ but $(l_1,k_1)\ne (l_2,k_2)$. It is in the set as long as one pair of $(l,k)$ satisfies the requirement. The same remark applies for similar definitions in what follows.
 \end{rem}

 Please note that we have tried to make sure that (1) it satisfies the properties below and (2) it is as small as possible, because the smaller the set is, the better regularity we prove.
\begin{lem}\label{the2thset}
 $\mathcal T_2$ is a set with the following properties:

  1) $\mathcal{T}_{1}\subset\mathcal{T}_{2}$,

  2) $\mbox{SPAN}(\mathcal{T}_{2})$ is closed under multiplication,

  3) for any $u\in \mathcal T_2$,
  $$\pfrac{u}{x_i} \in \mbox{SPAN}(\mathcal{T}_{2}), \qquad i=1,2,\cdots,n.$$
\end{lem}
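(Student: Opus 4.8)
The plan is to reduce all three assertions to two classical facts about spherical harmonics, using the isomorphism $\mathcal{H}_m(S^{n-1})\cong\mathcal{H}_m(n)$ recalled above: to each $G_m\in\mathcal{H}_m(S^{n-1})$ corresponds a harmonic homogeneous polynomial $P_m$ of degree $m$ with $P_m(x)=r^mG_m(\omega)$, so that every generator of $\mathcal T_2$ can be written as
\begin{equation*}
u=r^{(2-n)l-k}G_m=r^{(2-n)l-k-m}\,P_m(x).
\end{equation*}
The two facts I will invoke (both in \cite{bookharmonicfunction}) are: (i) $\partial P_m/\partial x_i$ is a harmonic homogeneous polynomial of degree $m-1$ (vanishing if $m=0$); and (ii) every homogeneous polynomial of degree $d$ decomposes as $\sum_{j=0}^{\lfloor d/2\rfloor}r^{2j}H_{d-2j}$ with $H_{d-2j}\in\mathcal H_{d-2j}(n)$. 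With $u$ in the form above the whole argument becomes a matter of tracking the exponent of $r$ together with the three index constraints. Assertion 1) is then immediate: the $\mathcal T_1$-generator $r^{(2-n)-k}G_k$ (where $k\in\mathbb N$, hence $k\ge1$) is the $\mathcal T_2$-generator with indices $(l,k,m)=(1,k,k)$, for which $l=1\ge1$, $k\ge l$ and $(k-m)/2=0\in\mathbb N\cup\{0\}$.

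For assertion 3) I differentiate $u=r^{s}P_m$ with $s=(2-n)l-k-m$, using $\partial r/\partial x_i=x_i/r$:
\begin{equation*}
\pfrac{u}{x_i}=s\,r^{s-2}x_iP_m+r^{s}\pfrac{P_m}{x_i}.
\end{equation*}
By fact (i) the second summand equals $r^{(2-n)l-(k+1)}\tilde G_{m-1}$, a $\mathcal T_2$-generator with indices $(l,k+1,m-1)$; the point is that $(k+1)-(m-1)=(k-m)+2$ stays even. For the first summand, $x_iP_m$ is homogeneous of degree $m+1$, and fact (ii) expands it into harmonic pieces of degrees $m+1-2j$; since the total homogeneity is unchanged, each piece contributes a term $r^{(2-n)l-(k+1)}\hat G_{m+1-2j}$, a $\mathcal T_2$-generator with indices $(l,k+1,m+1-2j)$ for which $\tfrac12\big((k+1)-(m+1-2j)\big)=\tfrac{k-m}{2}+j\in\mathbb N\cup\{0\}$. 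Hence $\partial u/\partial x_i\in\text{SPAN}(\mathcal T_2)$.

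Assertion 2) follows the same template. By bilinearity it suffices to treat a product of two generators,
\begin{equation*}
u_1u_2=r^{(2-n)(l_1+l_2)-(k_1+k_2)}\,G_{m_1}G_{m_2},
\end{equation*}
and passing to polynomials gives $G_{m_1}G_{m_2}=r^{-m_1-m_2}P_{m_1}P_{m_2}$. Applying fact (ii) to the degree-$(m_1+m_2)$ polynomial $P_{m_1}P_{m_2}$ rewrites $G_{m_1}G_{m_2}$ as a finite sum of spherical harmonics $\hat G_{m_1+m_2-2j}$, so that $u_1u_2$ is a sum of $\mathcal T_2$-generators with indices $(l_1+l_2,\,k_1+k_2,\,m_1+m_2-2j)$; here $l_1+l_2\ge1$, $k_1+k_2\ge l_1+l_2$, and $\tfrac12\big((k_1+k_2)-(m_1+m_2-2j)\big)=\tfrac{k_1-m_1}{2}+\tfrac{k_2-m_2}{2}+j\in\mathbb N\cup\{0\}$.

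The only real obstacle is the bookkeeping of the three defining constraints, above all the parity condition $(k-m)/2\in\mathbb N\cup\{0\}$: it is exactly this condition that forces closedness, since both multiplication by $x_i$ and the reduction of a polynomial to its harmonic components change $k$ and $m$ only by amounts preserving the parity of $k-m$, while the inequalities $l\ge1$ and $k\ge l$ are manifestly monotone under adding indices or raising $k$ by one. Verifying that all three constraints propagate through every term produced by facts (i) and (ii) is the substance of the proof; the remainder is exponent arithmetic.
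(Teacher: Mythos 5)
Your proof is correct and follows essentially the same route as the paper's: pass from $G_m$ to the harmonic homogeneous polynomial $r^mG_m$, use the decomposition $\mathcal{P}_d(n)=\bigoplus_j|x|^{2j}\mathcal{H}_{d-2j}(n)$ restricted to $S^{n-1}$, and track the exponent of $r$ together with the index constraints. Your bookkeeping of the parity condition $(k-m)/2\in\mathbb N\cup\{0\}$ is in fact more explicit than in the paper's own argument, which leaves that verification implicit.
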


 \begin{proof}
 First $\mathcal{T}_{1}\subset\mathcal{T}_{2}$ is obvious. For 2), if
  $$X_{1}=r^{(2-n)l_{1}-k_{1}}G_{m_{1}},X_{2}=r^{(2-n)l_{2}-k_{2}}G_{m_{2}}\in\mathcal{T}_{2},$$
  then
  \begin{equation}\label{product2}
  X_{1}\cdot X_{2}=r^{(2-n)(l_{1}+l_{2})-(k_{1}+k_{2})}G_{m_{1}}\cdot G_{m_{2}}.
  \end{equation}
  Because $r^{m}G_{m}$ is a harmonic polynomial,
  \begin{equation}\label{harprod}
  r^{m_{1}}G_{m_{1}}r^{m_{2}}G_{m_{2}}\in\mathcal{P}_{m_{1}+m_{2}}(n).
  \end{equation}
 Recalling (see \cite{bookharmonicfunction}) that
 \begin{equation}\label{eqn:decomposition}
 \mathcal{P}_{k}(n) =
\mathcal{H}_{k}(n) \oplus |x|^{2}\mathcal{H}_{k-2}(n)
\oplus \cdots \oplus
|x|^{2j}\mathcal{H}_{k-2j}(n)
\oplus  \cdots \oplus
 |x|^{2[\frac{k}{2}]}\mathcal{H}_{k-2[\frac{k}{2}]}(n),
 \end{equation}
 and then by restricting (\ref{harprod}) to $S^{n-1}$, we have
 \begin{equation}\label{eqn:decomposition*}
 G_{l}\cdot G_{m}\in\mathcal{H}_{l+m}(S^{n-1})\oplus \cdots\oplus
 \mathcal{H}_{l+m-2j}(S^{n-1})\oplus \cdots\oplus
 \mathcal{H}_{l+m-2[\frac{l+m}{2}]}(S^{n-1}).
 \end{equation}
Therefore (\ref{product2}) and (\ref{eqn:decomposition*}) imply  $X_{1}\cdot X_{2}\in\text{SPAN}(\mathcal{T}_{2}).$

For the proof of 3), using the fact that
	\begin{equation}\label{smallfact}
	\frac{\partial r}{\partial x^{i}}=\frac{x^{i}}{r}\in\mathcal{H}_{1}(S^{n-1}),
\end{equation}
we compute
\begin{eqnarray*}
	& &\partial_{x_i} (r^{(2-n)l-k}G_{m})= \partial_{x_i}\left( r^{(2-n)l-k-m}r^{m}G_{m}\right) \\
    &=&r^{(2-n)l-k-m}(\partial_{x^{i}} (r^{m}G_{m}))+((2-n)l-k-m)r^{(2-n)l-k-1}\cdot \frac{x^{i}}{r}\cdot G_{m}\\
    &=&r^{(2-n)l-k-1}(G_{m-1}+G_{m-3}+\cdot\cdot\cdot)+r^{(2-n)l-k-1}(G_{m+1}+G_{m-1}+\cdot\cdot\cdot)\\
    &=&r^{-1-k-(n-2)l}
 (G_{m+1}+G_{m-1}+\cdots+G_{m+1-2[\frac{m+1}{2}]}).
\end{eqnarray*}
Here in the last equality we have used (\ref{eqn:decomposition}), (\ref{eqn:decomposition*}) and
the fact $r^{m}G_{m}$ is a homogenous harmonic polynomial. Consequently,
 \begin{eqnarray*}
	 \partial_{x_i} (r^{-k-(n-2)l}G_{m})\in \mbox{SPAN}(\mathcal{T}_{2}),
 \end{eqnarray*}
if
\begin{eqnarray*}
	 r^{-k-(n-2)l}G_{m}\in \mbox{SPAN}(\mathcal{T}_{2}).
 \end{eqnarray*}
 This finishes the proof of the lemma and explains the definition of $\mathcal T_2$.
\end{proof}

\begin{rem}
In the above proof, the function $G_m\in\mathcal{H}_{m}(S^{n-1})$ may vary from line to line. And in the rest of paper, we won't
specially point it out when this happens.
\end{rem}

\subsection{Almost closedness under the inverse of $\triangle$}\label{sec:inverse}

For the proof of our main theorem, we need to consider the operator $\triangle^{-1}$. Very roughly speaking, the effect of $\triangle$ is just reducing the power of $r$ by $2$. More precisely, for almost all $\sigma\in \mathbb{R}$, $\triangle r^\sigma G_m$ is a nonzero multiple of $r^{\sigma-2}G_m$. There are exceptional cases, namely, if $r^\sigma G_m$ is harmonic, we do not expect the same result.

As it turns out (see later proofs), for $n>5$, no exceptional case happens. So we simply define
 \begin{eqnarray*}
	{\mathcal T} &:=&\Big\{r^\sigma G_m; \quad G_m\in \mathcal{H}_{m}(S^{n-1}), \sigma=2j -(n-2)l-k,\\
                 & &\quad k\geq l \geq j+1,\quad l,k,m,j,\frac{k-m}{2}\in \mathbb N\cup \{0\}\Big\}.
\end{eqnarray*}
We can check by using the same proof as Lemma \ref{the2thset} (details are omitted),
\begin{lem} \label{finalset5}
 $\mathcal T$ is a set satisfying the three properties below:

  1) $\mathcal{T}_{2}\subset\mathcal{T}$,

  2) $ \mbox{SPAN}(\mathcal T)$ is closed under multiplication,

  3) for any $u\in \mathcal T$,
  $$\frac{\partial u}{\partial x_i} \in \mbox{SPAN}(\mathcal{T}), \qquad i=1,2,\cdots,n.$$
\end{lem}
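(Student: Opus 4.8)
The plan is to imitate the proof of Lemma~\ref{the2thset} verbatim, now carrying the extra parameter $j$ and the relaxed exponent $\sigma = 2j-(n-2)l-k$ through each operation. The point to keep in mind is that $\mathcal{T}_2$ is exactly the $j=0$ slice of $\mathcal{T}$: setting $j=0$ in the defining exponent gives $\sigma=-(n-2)l-k$ with the constraint $k\geq l\geq 1$, which is precisely the defining data of $\mathcal{T}_2$. This proves property 1) and tells us that the only genuinely new feature to control is the chain of inequalities $k\geq l\geq j+1$ together with the integrality/parity condition $\frac{k-m}{2}\in\mathbb N\cup\set{0}$ (which encodes both $k\geq m$ and $k\equiv m \pmod 2$). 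So the whole proof reduces to checking that these are preserved under multiplication and differentiation.

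For property 2), I would take $X_a=r^{\sigma_a}G_{m_a}$ with $\sigma_a=2j_a-(n-2)l_a-k_a$ for $a=1,2$, and compute $X_1 X_2=r^{\sigma_1+\sigma_2}G_{m_1}G_{m_2}$. Setting $j=j_1+j_2$, $l=l_1+l_2$, $k=k_1+k_2$ gives $\sigma_1+\sigma_2=2j-(n-2)l-k$. Then $k\geq l$ follows by adding $k_a\geq l_a$, and $l\geq j+1$ holds with slack since $l=l_1+l_2\geq (j_1+1)+(j_2+1)=j+2$. Decomposing $G_{m_1}G_{m_2}$ by \eqref{eqn:decomposition*} into components $G_{m_1+m_2-2p}$, each component satisfies $k-m=(k_1-m_1)+(k_2-m_2)+2p$, a sum of nonnegative even integers, so the parity condition and $k\geq m$ survive and every piece lies in $\mathcal T$. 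For property 3), I would rerun the differentiation computation of Lemma~\ref{the2thset} with $\sigma$ in place of $(2-n)l-k$: writing $u=r^{\sigma-m}\,(r^m G_m)$ and using that $\partial_{x_i}(r^m G_m)$ is harmonic of degree $m-1$ and that $\frac{x^i}{r}G_m$ decomposes by \eqref{eqn:decomposition*}, one obtains $r^{\sigma-1}$ times spherical harmonics of degrees $m+1,m-1,\dots$. This raises $k$ to $k+1$ while leaving $j,l$ fixed, so $k+1>k\geq l\geq j+1$ persists, and a component of degree $m'=m+1-2t$ has $(k+1)-m'=(k-m)+2t$, again even and nonnegative. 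Hence $\partial_{x_i}u\in\mbox{SPAN}(\mathcal T)$.

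The step needing the most care, though it is still routine bookkeeping, is the multiplication step, where $(j,l,k)$ all add simultaneously and one must confirm that $l\geq j+1$ is not destroyed; the mechanism is that the defining inequalities of $\mathcal T$ are \emph{superadditive} in exactly the right way, the $j$-inequality even gaining a unit of slack. I would emphasize that no exceptional (harmonic) exponent can intervene in either closure property, since unlike the forthcoming analysis of $\triangle^{-1}$ in Section~\ref{sec:sol}, these computations never require dividing by the factor $\sigma(\sigma+n-2)-m(m+n-2)$; they only multiply and differentiate, so the argument is insensitive to whether a given $r^\sigma G_m$ happens to be harmonic.
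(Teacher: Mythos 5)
Your proposal is correct and follows exactly the route the paper intends: the paper's proof of Lemma \ref{finalset5} is simply the remark that the argument of Lemma \ref{the2thset} carries over with details omitted, and your write-up supplies precisely those details, correctly tracking the new parameter $j$ through multiplication (where $l=l_1+l_2\geq j_1+j_2+2$ gives the needed slack) and differentiation (where only $k$ increments). The bookkeeping on the parity condition $\frac{k-m}{2}\in\mathbb N\cup\set{0}$ and the observation that no division by $\sigma(\sigma+n-2)-m(m+n-2)$ occurs are both accurate.
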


However, when $n=4$, the exceptional case does happen and we are forced to introduce $\log$ terms in the expansion. The problem is how many $\log$ terms are needed, because we certainly do not want anything more than necessary. The answer lies in the proof in Section \ref{sec:str} and here we can only define
	\begin{equation}\label{4set}
		\begin{split}
		\widetilde{\mathcal{T}} &:=\Big\{r^\sigma (\log r)^{i}G_m; \quad  G_m\in \mathcal{H}_{m}(S^{3}),\sigma=2j -(4-2)l-k, \\
                 & \quad k\geq l \geq j+1,j\geq i,\quad l,k,i,m,j,\frac{k-m}{2}\in \mathbb N\cup \{0\}\Big\}\\
                 & =\Big\{r^\sigma (\log r)^{i}G_m; \quad G_m\in \mathcal{H}_{m}(S^{3}), \sigma= -2l-k,\\
                 &  \quad l\geq 1,k\geq l+i, \quad l,k,i,m ,\frac{k-m}{2} \in\mathbb N\cup \{0\}\Big\}	
		\end{split}
		\end{equation}
and check some basic properties.
\begin{lem}\label{lem:4dimset}
	
	$\widetilde{\mathcal T}$ is a set satisfying the three properties below:

  1) $\mathcal{T}_{2}\subset \widetilde{\mathcal{T}}$,

  2) $ \mbox{SPAN}(\widetilde{\mathcal{T}})$ is closed under multiplication,

  3) for any $u\in \widetilde{\mathcal T}$,
  $$\frac{\partial u}{\partial x_i} \in \mbox{SPAN}(\widetilde{\mathcal{T}}), \qquad i=1,2,\cdots,n.$$
\end{lem}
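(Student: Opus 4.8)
The plan is to follow the proof of Lemma \ref{the2thset} almost verbatim, carrying the extra factor $(\log r)^i$ through each computation and checking that the one new constraint of $\widetilde{\mathcal T}$, namely $k\geq l+i$, survives every operation. Throughout I use the second (cleaner) description in (\ref{4set}): elements of $\widetilde{\mathcal T}$ are $r^{-2l-k}(\log r)^i G_m$ with $l\geq 1$, $k\geq l+i$ and $\frac{k-m}{2}\in\mathbb N\cup\{0\}$, recalling that $n=4$ makes $(2-n)l=-2l$.

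Property 1) should be immediate: an element $r^{-2l-k}G_m$ of $\mathcal T_2$ (where $k\geq l\geq 1$) is nothing but the $i=0$ case of $\widetilde{\mathcal T}$, for which $k\geq l+i$ reduces to $k\geq l$. For property 2) I would take two generators $X_s=r^{-2l_s-k_s}(\log r)^{i_s}G_{m_s}$, multiply them to get $r^{-2(l_1+l_2)-(k_1+k_2)}(\log r)^{i_1+i_2}G_{m_1}G_{m_2}$, and expand $G_{m_1}G_{m_2}$ via (\ref{eqn:decomposition*}). With $l=l_1+l_2$, $k=k_1+k_2$, $i=i_1+i_2$, the inequality $k\geq l+i$ follows by adding the two hypotheses, $l\geq 2\geq 1$ is clear, and the parity condition $\frac{k-m'}{2}\in\mathbb N\cup\{0\}$ for the output degrees $m'=m_1+m_2-2j$ holds because $\frac{k-m'}{2}=\frac{k_1-m_1}{2}+\frac{k_2-m_2}{2}+j$ is a sum of nonnegative integers. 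Hence every resulting term lies in $\mbox{SPAN}(\widetilde{\mathcal T})$.

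The real work, and the only place the logarithm matters, is property 3). Writing $u=(\log r)^i\,r^{-2l-k}G_m$ and using $\partial_{x_j}(\log r)=x_j/r^2$, the product rule splits $\partial_{x_j}u$ into two pieces. The first, $(\log r)^i\,\partial_{x_j}(r^{-2l-k}G_m)$, is handled exactly by the computation in Lemma \ref{the2thset}: it equals $(\log r)^i\,r^{-2l-(k+1)}(G_{m+1}+G_{m-1}+\cdots)$, so $i$ is unchanged while $k$ becomes $k+1$, and $k+1\geq l+i$ clearly persists. The second piece, $i(\log r)^{i-1}\frac{x_j}{r^2}\,r^{-2l-k}G_m$, becomes $i(\log r)^{i-1}r^{-2l-(k+1)}(G_{m+1}+G_{m-1}+\cdots)$ after decomposing $\frac{x_j}{r}G_m$ with $\frac{x_j}{r}\in\mathcal H_1(S^3)$; here $i$ drops to $i-1$ and $k$ rises to $k+1$, so $k+1\geq l+(i-1)$ holds a fortiori (and the term simply vanishes when $i=0$). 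In both pieces the parity requirement $\frac{(k+1)-m'}{2}\in\mathbb N\cup\{0\}$ for $m'\in\{m+1,m-1,\dots\}$ follows from $k-m$ being even. Therefore $\partial_{x_j}u\in\mbox{SPAN}(\widetilde{\mathcal T})$.

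The hard part, such as it is, is this logarithmic derivative: differentiating $(\log r)^i$ lowers the power of $\log r$ by one, and I must confirm that the simultaneous gain of one power of $r$ (the shift $k\mapsto k+1$) keeps the balance $k\geq l+i$ intact. This is exactly why the defining inequality of $\widetilde{\mathcal T}$ is $k\geq l+i$ rather than $k\geq l$; the computation above shows it is preserved—indeed strengthened—under differentiation, mirroring how additivity of $(l,k,i)$ preserves it under multiplication.
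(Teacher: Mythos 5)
Your proof is correct and follows essentially the same route as the paper: the same spherical-harmonic decomposition of $\frac{x_j}{r}G_m$ via (\ref{eqn:decomposition*}), the same bookkeeping showing $k\mapsto k+1$ while $i$ stays or drops, so that $k\geq l+i$ is preserved. The only difference is cosmetic — you group the product rule into two pieces where the paper uses three, and you spell out properties 1) and 2), which the paper leaves to the reader.
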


\begin{proof}
1) and 2) are easy to see by the argument used in the proof of Lemma \ref{the2thset}. We only prove 3). Set $\sigma=-2l-k$,
by using (\ref{smallfact}) we obtain
\begin{eqnarray*}
	& &\frac{\partial}{\partial x^{k}}(r^\sigma (\log r)^{i}G_m)=\frac{\partial}{\partial x^{k}}(r^{\sigma-m} (\log r)^{i}r^{m}G_{m})\\
    &=&r^{\sigma-m} (\log r)^{i}(\partial x^{k}(r^{m}G_{m}))+i\cdot r^{\sigma-m} (\log r)^{i-1}r^{-1}\cdot \frac{x^{k}}{r}\cdot  r^{m}G_{m}\\
    & &+(\sigma-m)r^{\sigma-m-1} (\log r)^{i}\cdot \frac{x^{k}}{r}\cdot r^{m}G_{m}\\
    &=&r^{\sigma-1} (\log r)^{i}(G_{m-1}+G_{m-3}+\cdot\cdot\cdot)+i\cdot r^{\sigma-1} (\log r)^{i-1}(G_{m+1}+G_{m-1}+\cdot\cdot\cdot)\\
    & &+(\sigma-m)r^{\sigma-1} (\log r)^{i}(G_{m+1}+G_{m-1}+\cdot\cdot\cdot).
\end{eqnarray*}
Here again in the last equality we have used (\ref{eqn:decomposition}), (\ref{eqn:decomposition*}) and
the fact $r^{m}G_{m}$ is a homogenous harmonic polynomial. It's straightforward to check that every term in the above formula
can be written in the form
\begin{eqnarray*}
& &r^{\sigma-1} (\log r)^{\alpha}G_{\beta}:=f_{\alpha,\beta}, \\
& &\alpha=i,i-1,\quad\beta=m+1, m-1,\cdots,{m+1-2[\frac{m+1}{2}]}.
\end{eqnarray*}
Let $ \tilde{k}=k+1,$ then $ \sigma-1=-2l-\tilde{k}$.
At once we have
$$ f_{\alpha,\beta}=r^{-2l-\tilde{k}} (\log r)^{\alpha}G_{\beta} \in\widetilde{\mathcal{T}},$$
  if $r^\sigma (\log r)^{i}G_m $ belongs to
$ \widetilde{\mathcal{T}}$, since $l\geq 1,\tilde{k}\geq l+\alpha+1,$ and $l,\tilde{k},\alpha,\beta ,\frac{\tilde{k}-\beta}{2} \in\mathbb N\cup \{0\}$ (see(\ref{4set})).
Therefore 3) holds.
\end{proof}

\subsection{The inverse of the metric matrix}
In this subsection, we prove a lemma for later use. Our aim is to show that $g^{ij}-\delta_{ij}$ has some expansion if $g_{ij}-\delta_{ij}$ is assumed to have an expansion up to order $-q$.

We need the following result, which is similar to Lemma 6.8 in \cite{yin2016analysis}.
 \begin{lem}\label{composition}
 For any smooth function $F:\mathbb{R}\rightarrow \mathbb{R}$, if $f\in \mathcal{C}^{\infty}_{1-n}$ has an expansion up to order $-q$ in the sense of Definition \ref{defexpansionintr} with terms in $A$, which is a set meets the condition $ \text{SPAN}(A)$ is closed under multiplication, then so do $F\circ f-F(0)$.
\end{lem}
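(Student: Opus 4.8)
The plan is to use the Taylor expansion of $F$ around $0$ combined with the assumed closedness of $\mbox{SPAN}(A)$ under multiplication. Since $F$ is smooth, write
\begin{equation*}
F(t)=\sum_{p=1}^{N}\frac{F^{(p)}(0)}{p!}t^{p}+R_{N}(t),
\end{equation*}
so that $F\circ f-F(0)=\sum_{p=1}^{N}\frac{F^{(p)}(0)}{p!}f^{p}+R_{N}(f)$, where $R_{N}(t)=O(t^{N+1})$ near $t=0$. The point of subtracting $F(0)$ is precisely to kill the constant term, which does not decay and hence has no place in the expansion. First I would record that $f\in\mathcal{C}^{\infty}_{1-n}$ means $f=O(-(n-1))$ in the sense of the paper (this is the qualitative decay coming from Theorem \ref{MAINTHM1}); in particular $f$ and all its derivatives are small at infinity, so $F\circ f$ is well-defined and smooth for large $r$, and all the Taylor manipulations are legitimate there.

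The main work is bookkeeping of orders. By hypothesis $f=f_{q}+O(-q-\varepsilon)$ with $f_{q}\in\mbox{SPAN}(A)$. For each fixed power $f^{p}$, I would expand $f^{p}=(f_{q}+O(-q-\varepsilon))^{p}$ by the binomial-type expansion: the leading piece $f_{q}^{p}$ lies in $\mbox{SPAN}(A)$ because $\mbox{SPAN}(A)$ is closed under multiplication, and every cross term contains at least one factor of the error $O(-q-\varepsilon)$ together with factors that are themselves $O(-(n-1))$ (being either $f_{q}$ or $f$, both of which decay at least like $r^{1-n}$ since $n\geq 4$). Here I would use that the class $O(-q)$ is closed under multiplication and that a product of an $O(-q-\varepsilon)$ factor with any bounded-decay factor stays $O(-q-\varepsilon)$; hence each cross term is $O(-q-\varepsilon)$. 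Therefore
\begin{equation*}
f^{p}=f_{q}^{p}+O(-q-\varepsilon),\qquad f_{q}^{p}\in\mbox{SPAN}(A).
\end{equation*}
Summing over $p$ from $1$ to $N$ gives a candidate $\sum_{p}\frac{F^{(p)}(0)}{p!}f_{q}^{p}\in\mbox{SPAN}(A)$ plus an $O(-q-\varepsilon)$ error.

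The step I expect to be the genuine obstacle is controlling the Taylor remainder $R_{N}(f)$ in the strong $O(-q)$ sense, which demands bounds on all derivatives, not merely on the function itself. Since $R_{N}(t)=O(t^{N+1})$ and $f=O(-(n-1))$, one gets $R_{N}(f)=O\bigl(-(N+1)(n-1)\bigr)$ at the level of sizes; I would choose $N$ large enough that $(N+1)(n-1)\geq q+1$, so the remainder is negligible to order $-q-\varepsilon$. The care required is that $O(-q)$ controls all derivatives $r^{j}D^{j}(\cdot)$, so I must differentiate $R_{N}(f)$ repeatedly via the chain rule: each derivative falls either on an $F^{(k)}(f)$ factor (producing another smooth, bounded function of $f$ by smoothness of $F$ and boundedness of $f$) or on some factor of $f$ or its derivatives, each of which carries its own decay with the correct power of $r$ because $f\in\mathcal{C}^{\infty}_{1-n}$. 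Assembling these, every term in $D^{j}R_{N}(f)$ retains at least $N+1$ decaying factors, so $r^{j}D^{j}R_{N}(f)$ is bounded by $C(j)r^{-q}$ as required. This is exactly the role played by the smoothness of $F$ (giving uniform control of all $F^{(k)}$ on the bounded range of $f$) and by the full strength of the $\mathcal{C}^{\infty}_{1-n}$ hypothesis rather than mere pointwise decay; this is where the argument parallels Lemma 6.8 in \cite{yin2016analysis}, and it is the only place where real estimation, as opposed to formal algebra, is needed.
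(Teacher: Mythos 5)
Your proposal is correct and follows essentially the same route as the paper: a finite Taylor expansion of $F$ at $0$ with the order chosen so that $(N+1)(n-1)>q$, closedness of $\mbox{SPAN}(A)$ under multiplication (via the product claim for expansions) to handle the polynomial part, and a chain-rule/derivative-bookkeeping argument to show the remainder is $O(-q-\varepsilon)$. The only cosmetic difference is that the paper uses the integral form of the Taylor remainder, which makes the ``all derivatives'' control of $R_N(f)$ slightly more explicit, but the substance is identical.
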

For the reader's convenience we give the proof here.
Before the proof, we list several obvious equalities, which can be checked straightforwardly:

\begin{equation}\label{ordercomputatation}
	\begin{split}
& r^\sigma G_m O(-q)=O(\sigma-q),\\
& \partial_{x_i}O(-q)=O(-(q+1)),\\
& O(-q_{1})O(-q_{2})=O(-q_{1}-q_{2}),
	\end{split}
\end{equation}
where $q,q_{1},q_{2}\geq0$.

\begin{rem}\label{productexp}
We claim that if $f_{1},f_{2}\in\mathcal{C}^{\infty}_{1-n}$
both have an expansion up to order $-q$ in terms of $A$ in the sense of Definition \ref{defexpansionintr}, then
		so do $f_{1}+f_{2}$	and $f_{1}\cdot f_{2}$.
The claim holds trivially for $f_{1}+f_{2}$. For $f_{1}\cdot f_{2}$, it follows from the assumption that $ \text{SPAN}(A)$ is closed under multiplication and the first and third equality in (\ref{ordercomputatation}).
\end{rem}

\begin{proof}
 We assume that for some $ \epsilon>0$ there exists  $ f_{q}\in \text{SPAN}(A) $ such that $f=f_{q}+O(-q-\epsilon)$.
Recall the Taylor expansion formula with the integral remainder,
\begin{equation*}
F(x)=\sum_{l=0}^{k}\frac{F^{(l)}(0)}{l!}x^{l}+\frac{1}{k!}
\Big(\int_{0}^{1}F^{(k+1)}(tx)(1-t)^{n}\,dt\Big)x^{k+1},
\end{equation*}
in which we choose $k$ so that $(k+1)(1-n)<-q$.
By the assumption $ \text{SPAN}(A)$ is closed under multiplication,
\begin{equation*}
\sum_{l=0}^{k}\frac{F^{(l)}(0)}{l!}(f)^{l}-F(0)
\end{equation*}
has an expansion up to order $-q$, due to the claim in Remark \ref{productexp}. To be more precise, it takes the form $F_{q}+O(-q-\epsilon)$, $ F_{q}\in \text{SPAN}(A) $.
Thanks to our choice of $k$ and the fact $f\in \mathcal{C}^{\infty}_{1-n}$, which implies $f=O(1-n)$, we have
$ (f)^{k+1}=O(-q-\epsilon)$ for some $ \epsilon>0$.
By direct computation, one can check $$\int_{0}^{1}F^{(k+1)}(tf)(1-t)^{n}\,dt=O(0),$$ therefore $$\Big(\int_{0}^{1}F^{(k+1)}(tf)(1-t)^{n}\,dt\Big)(f)^{k+1}=O(-q-\epsilon).$$
Thus we can conclude $F\circ f-F(0)=F_{q}+O(-q-\epsilon)$, $ F_{q}\in \text{SPAN}(A) $.
\end{proof}

The following lemma roughly states that a set which is closed under multiplication (e.g., $\text{SPAN}(\mathcal{T})$,
$ \text{SPAN}(\widetilde{\mathcal{T}}$)) is closed under the matrix inversion.
 \begin{lem}\label{leminversemetric}
 Let $A$ be a set which has the property
 $ \text{SPAN}(A)$ is closed under multiplication.
If $(g_{ij}-\delta_{ij})\in \mathcal{C}^{\infty}_{1-n}$ has an expansion up to order $-q$ in terms of $A$ in the sense of Definition \ref{defexpansionintr}, then $g^{ij}-\delta_{ij}$ has an expansion up to order $-q$ in the same sense.
\end{lem}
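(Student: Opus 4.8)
The plan is to realize each entry of the inverse matrix as an explicit algebraic expression in the entries $g_{kl}$, and then to reduce everything to the two elementary operations already under control: forming polynomials (governed by Remark \ref{productexp}) and inverting a single scalar close to $1$ (governed by Lemma \ref{composition}). Throughout I write $h_{kl}:=g_{kl}-\delta_{kl}$, so that by hypothesis each $h_{kl}\in\mathcal{C}^{\infty}_{1-n}$ has an expansion up to order $-q$ with terms in $A$.

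First I would invoke Cramer's rule, $g^{ij}=C_{ji}/\det(g_{kl})$, where $C_{ji}$ is the $(j,i)$ cofactor. Both $C_{ji}$ and $\det(g_{kl})$ are polynomials with integer coefficients in the entries $g_{kl}=\delta_{kl}+h_{kl}$, and evaluating at $h\equiv 0$ (i.e.\ at $g=\mathrm{Id}$) gives $C_{ji}=\delta_{ji}+c_{ji}$ and $\det(g_{kl})=1+p$, where $c_{ji}$ and $p$ are polynomials in the $h_{kl}$ with \emph{no} constant term. Such a polynomial is a finite sum of products of the $h_{kl}$, each factor lying in $\mathcal{C}^{\infty}_{1-n}$ and possessing an expansion up to order $-q$; since every product of at least two factors lies in $\mathcal{C}^{\infty}_{2-2n}\subseteq\mathcal{C}^{\infty}_{1-n}$ (as $r^{2-2n}\le r^{1-n}$ for $r\ge 1$), repeated use of Remark \ref{productexp} shows $c_{ji},p\in\mathcal{C}^{\infty}_{1-n}$ and that both have an expansion up to order $-q$ with terms in $A$.

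It then remains to treat the reciprocal $1/\det(g_{kl})=1/(1+p)$. The map $x\mapsto 1/(1+x)$ is not globally defined, so I would fix instead a smooth $F:\Real\to\Real$ agreeing with $1/(1+x)$ on a neighborhood of $0$; since $p=O(1-n)$ tends to $0$ at infinity, $F\circ p=1/(1+p)$ for $r$ large, which is all that matters. Lemma \ref{composition} applied to $F$ and $p$ then shows $s:=1/(1+p)-1=F\circ p-F(0)$ has an expansion up to order $-q$ with terms in $A$, and one checks directly from $s=-p/(1+p)$ that $s\in\mathcal{C}^{\infty}_{1-n}$. Finally I would expand
\begin{equation*}
	g^{ij}-\delta_{ij}=(\delta_{ji}+c_{ji})(1+s)-\delta_{ij}=\delta_{ij}\,s+c_{ji}+c_{ji}\,s,
\end{equation*}
and observe that each summand has the required expansion: $\delta_{ij}s$ is either $s$ or $0$, $c_{ji}$ was handled above, and $c_{ji}s$ is a product of two functions of $\mathcal{C}^{\infty}_{1-n}$ admitting such expansions, so Remark \ref{productexp} applies.

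I expect the only genuine point requiring care to be the scalar inversion: Lemma \ref{composition} is stated for a globally smooth $F:\Real\to\Real$, so one must legitimately replace $1/(1+x)$ by such an $F$ using the decay $p\to 0$, and then confirm that the resulting $s$ still lies in $\mathcal{C}^{\infty}_{1-n}$ so that the concluding product step is licensed by Remark \ref{productexp}. The algebraic part—Cramer's rule and the bookkeeping of which polynomials have vanishing constant term—is routine.
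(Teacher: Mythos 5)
Your proposal is correct and follows essentially the same route as the paper: Cramer's rule, the observation that the cofactors and the determinant differ from $\delta_{ji}$ and $1$ by polynomials in $g_{kl}-\delta_{kl}$ with no constant term (handled by Remark \ref{productexp}), and Lemma \ref{composition} for the reciprocal of the determinant. Your extra care in replacing $1/(1+x)$ by a globally smooth $F$ agreeing with it near $0$ is in fact a small improvement, since the paper applies Lemma \ref{composition} directly with $F=\frac{1}{1+x}$, which is not smooth on all of $\Real$.
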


\begin{proof}

If $g_{ij}-\delta_{ij}$ has a expansion up to order $-q$ in $A$ in the sense of Definition \ref{defexpansionintr}, then formally
\begin{equation} \label{matrix11}
g_{ij}-\delta_{ij}=U_{q,ij}+O(-q-\epsilon),
\end{equation}
that is $$[g_{ij}]_{n\times n}=[\delta_{ij}]_{n\times n}+[U_{q,ij}+O(-q-\epsilon)]_{n\times n},$$
where $U_{q,ij}\in \text{SPAN}(A)$.

The inverse matrix $ [g^{ij}]$ is computed by
\begin{equation}\label{inverformular}
g^{ij}=\frac{A^{ij}}{\det(g)},
\end{equation}
 where $[A^{ij}]$ is the adjoint matrix of $[g_{ij}]$.
By the knowledge of linear algebra
 \begin{eqnarray*}
 A^{ij}&=&(-1)^{i+j}\det([A(j,i)]).
 \end{eqnarray*}
 The matrix $[A(j,i)] $ which is obtained by deleting the $j$th row and the $i$th column of $g$ is called the cofactor of $g$.

 When $i=j$, since $[A(i,i)]$ has the form of $[\delta_{kl}+c_{kl}]_{(n-1)\times(n-1)}$, using the equality
  \begin{equation}\label{det}
  \begin{split}
 \det ([\delta_{ij}+c_{ij}])
      =&1+\sum_{k=1}^{n}c_{kk}+\frac{1}{2!}\sum_{k_{1},k_{2}}\det\left([\begin{array}{cc}
      c_{k_{1}k_{2}} & c_{k_{1}k_{2}}\\ c_{k_{2}k_{1}} & c_{k_{2}k_{2}} \end{array}]\right)\\
       &+\cdots+\det([c_{ij}]),
      \end{split}
\end{equation}
we have
\begin{equation}\label{inversepart1}
  \begin{split}
 A^{ii}=&1+\sum_{k=1,k\neq i}^{n}U_{q,kk}+\cdots +O(-q-\epsilon)\\
 :=&1+W_{q,ii}+O(-q-\epsilon).
      \end{split}
\end{equation}
By virtue of the claim in Remark \ref{productexp}, (\ref{matrix11}) and the assumption on $A$, we have $W_{q,ii}\in \text{SPAN}(A)$.

Because the computations are the same when we calculate $A^{ij}$ no matter $i<j$ or $ i>j$, we just deal with the former case.
When $i<j$, we first point out two facts so as to calculate $A^{ij}$.
They are
\begin{itemize}
		\item from the calculating methods of determinant, $\det([A(i,j)])$ is the sum of terms which are of the form
\begin{equation}\label{cofactor1}
g_{1i_{1}}\;\cdots \;g_{j-1i_{j-1}}\;g_{j+1i_{j+1}}\;\cdots \;g_{ni_{n}},
\end{equation}
where $i_{1}\cdots i_{j-1}i_{j+1}\cdots i_{n} $ is a permutation of $\set{1,\cdots,n}\setminus \set{i} ;$
		\item the only coefficients of $[A(i,j)]$ with constant term are the following $n-2$ coefficients
$$ g_{kk},\,k\in \set{1,\cdots,n}\setminus \set{i,j}.$$
	\end{itemize}
From the above two facts, we know there is no constant term in the expansion of $\det([A(i,j)])$, since at least one factor of
(\ref{cofactor1}) has no constant term.
Again by the claim in Remark \ref{productexp}, (\ref{matrix11}) and the assumption on $A$, we can conclude $A^{ij}$ has the form
\begin{eqnarray}\label{inversepart12}
 A^{ij}=W_{q,ij}+O(-q-\epsilon),
 \end{eqnarray}
where $W_{q,ij}\in \text{SPAN}(A)$.

By using (\ref{det}) again,
\begin{eqnarray*}
\det(g)&=&1+\sum_{k=1}^{n}U_{q,kk}+\cdots+\det([{U_{q,ij}}])+O(-q-\epsilon)\\
      &:=&1+V_{q}+O(-q-\epsilon).
\end{eqnarray*}
Once again, by the claim in Remark \ref{productexp}, (\ref{matrix11}) and the assumption on $A$, $V_{q}\in \text{SPAN}(A)$.

If we choose $F=\frac{1}{1+x}$ and $f=V_{q}+O(-q-\epsilon)$ while using Lemma \ref{composition},
we obtain
\begin{equation}\label{inversepart3}
(\det(g))^{-1}=1+Z_{q}+O(-q-\epsilon),
\end{equation}
where $Z_{q}\in \text{SPAN}(A)$.

Putting (\ref{inversepart1}), (\ref{inversepart12}) and (\ref{inversepart3}) together via (\ref{inverformular}) we have
\begin{eqnarray*}\label{metricinverse}
g^{ij}&=&(\delta_{ij}+W_{q,ij}+O(-q-\epsilon))(1+Z_{q}+O(-q-\epsilon))\nonumber\\
&:=&\delta^{ij}-\tilde{U}_{q,ij}+O(-q-\epsilon),
\end{eqnarray*}
where $\tilde{U}_{q,ij}\in \text{SPAN}(A)$ by the claim in Remark \ref{productexp}.
Hence $g^{ij}-\delta_{ij} $ has an expansion in the sense of Definition \ref{defexpansionintr}.

\end{proof}

\begin{rem}\label{struorder}
If $A$ is $\mathcal{T}$ or $ \widetilde{\mathcal{T}}$,
then under the conditions of the last lemma we have from the definition of
$\mathcal{T}$ and $ \widetilde{\mathcal{T}}$ that
\begin{equation*}\label{ordernoconst}
g_{ij},g^{ij}= \delta_{ij}+O(1-n).
\end{equation*}
\end{rem}

\section{The Solutions Of Poisson Equations}\label{sec:sol}

As we have mentioned in the introduction, we need to solve the Poisson equations on $ \mathbb{R}^{n}\setminus B_{R}$ in the bootstrapping argument. This section is devoted to the discussion of this topic. It consists of two parts. In the first part, we study carefully the action of $\triangle^{-1}$ on $\mbox{SPAN}(\mathcal T)$ and $\mbox{SPAN}(\widetilde{\mathcal T})$. In the second part, we introduce a lemma due to Meyers \cite{Mey}, which handles the error term.

\subsection{Functions in $\mbox{SPAN}(\mathcal T)$ and $\mbox{SPAN}(\widetilde{\mathcal T})$ }
The discussion in this subsection is completely elementary. A term in $\mathcal T$ or $\widetilde{\mathcal T}$ is of the form
\begin{equation*}
	r^\sigma (\log r)^i G_m,
\end{equation*}
where $\sigma$, $i$ and $m$ are subject to some restrictions in the definition of $\mathcal T$ or $\widetilde{\mathcal T}$.

We are interested in finding a solution of the equation
\begin{equation}\label{poissonsingletermlog}
\triangle_{\mathbb{R}^{n}}u=r^{\sigma}(\log r)^{i}G_{m}
\end{equation}
on $ \mathbb{R}^{n}\setminus B_{1}$.
The following Proposition indicates that there is a subtle difference depending on whether or not $\sigma+n+m$ is zero. Hence, we call the right hand side {\bf exceptional} if
\begin{equation}
	\sigma+n+m=0.
	\label{exceptionaldef}
\end{equation}
%

\begin{prop}\label{solutionlog}
If $\sigma+n+m\neq 0$, then it has a solution taking the form
\begin{equation}\label{solutionlog1}
r^{\sigma+2}(\sum_{j=0}^{i}c_{j}(\log r)^{j})G_{m},c_{j}\in \mathbb{R};
\end{equation}
if $\sigma+n+m=0$, then it has a solution taking the form
\begin{equation}\label{solutionlog2}
r^{\sigma+2}(\sum_{j=0}^{i+1}c_{j}(\log r)^{j})G_{m},c_{j}\in \mathbb{R}.
\end{equation}
\end{prop}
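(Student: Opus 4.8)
The plan is to solve (\ref{poissonsingletermlog}) by separation of variables, exploiting that $G_m$ is an eigenfunction of $\triangle_{S^{n-1}}$. Setting $t=\log r$ and letting $\theta=r\partial_r=\partial_t$ denote the Euler operator, formula (\ref{polarharmonic}) yields the operator identity $r^2\triangle_{\mathbb{R}^n}=\theta(\theta+n-2)+\triangle_{S^{n-1}}$. I would look for a solution of the form $u=r^{\sigma+2}\psi(t)G_m$ with $\psi$ a polynomial in $t$ to be determined. Since $\theta$ acts on $r^{\sigma+2}\psi(t)$ as $r^{\sigma+2}\big((\sigma+2)+\partial_t\big)\psi$ and $\triangle_{S^{n-1}}G_m=-m(m+n-2)G_m$, the equation $r^2\triangle_{\mathbb{R}^n}u=r^{\sigma+2}(\log r)^iG_m$ collapses to the constant-coefficient ODE
\[
 Q(\partial_t)\psi=t^i,\qquad Q(D):=\big((\sigma+2)+D\big)\big((\sigma+n)+D\big)-m(m+n-2),
\]
and expanding gives $Q(D)=D^2+(2\sigma+n+2)D+\lambda$ with constant term $\lambda:=(\sigma+2)(\sigma+n)-m(m+n-2)$.

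Everything then hinges on the constant term $\lambda$. Solving the quadratic $\alpha(\alpha+n-2)=m(m+n-2)$ for $\alpha=\sigma+2$ shows $\lambda=0$ exactly when $\sigma+2\in\{m,\ 2-n-m\}$. The root $\sigma+2=2-n-m$ is precisely the exceptional condition $\sigma+n+m=0$, and it corresponds to $r^{\sigma+2}G_m$ being harmonic. The other root $\sigma+2=m$ (the growing harmonic) does not occur for the terms actually fed into the proposition: a short inspection of the exponent constraints defining $\mathcal T$ and $\widetilde{\mathcal T}$ forces $\sigma+2-m<0$, so $\sigma+n+m=0$ is the only relevant degenerate case and I may restrict to the two cases in the statement.

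In the non-exceptional case $\lambda\neq0$, the operator $Q(\partial_t)$ has nonzero constant term and is therefore invertible on the space of polynomials: matching coefficients from the top degree downward, there is a unique polynomial $\psi=\sum_{j=0}^{i}c_j t^j$ of degree $i$ solving $Q(\partial_t)\psi=t^i$ (with $c_i=1/\lambda$, the lower coefficients determined triangularly). Re-reading $t=\log r$ gives a solution of the form (\ref{solutionlog1}). In the exceptional case $\lambda=0$ I factor $Q(D)=D\big(D-(n+2m-2)\big)$; since $n\geq4$ makes $n+2m-2\neq0$, the root at $D=0$ is simple. Thus $Q(\partial_t)$ lowers polynomial degree by exactly one and annihilates constants, so $Q(\partial_t)\psi=t^i$ is solved by a unique $\psi$ of degree $i+1$ with vanishing constant term, producing the extra power of $\log r$ in (\ref{solutionlog2}).

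The only genuinely delicate point is the bookkeeping of the multiplicity of the root $D=0$ in the exceptional case: I must verify that the first-order coefficient $2\sigma+n+2=-(n+2m-2)$ is nonzero, so that $D=0$ is a simple and not a double root. This is exactly what guarantees that precisely one extra factor of $\log r$ appears (degree $i+1$ rather than $i+2$), and it is the step where the hypothesis $n\geq4$ is used. The remaining ingredients — the explicit action of $\theta$ on $r^{\sigma+2}\psi(t)$ and the triangular solve for the coefficients $c_j$ — are routine.
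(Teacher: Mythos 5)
Your proof is correct and is essentially the paper's argument in different clothing: passing to $t=\log r$ and the constant-coefficient operator $Q(D)=D^2+(2\sigma+n+2)D+\big(\lambda_{\sigma+2}-\lambda_m\big)$ is exactly the upper-triangular system the paper writes out as the matrix $A$ (your constant term is the paper's diagonal entry $\lambda_{\sigma,m}$, your first-order coefficient $2\sigma+n+2$ is the paper's $n\ast\sigma$, and your simple-root argument in the exceptional case is the paper's observation that $\bar A$ has rank $i+1$ because $n\ast\sigma=2-n-2m\neq 0$). One point where you are in fact more careful than the paper: the paper asserts that $\sigma+n+m\neq 0$ implies $\lambda_{\sigma+2}-\lambda_m\neq 0$, which overlooks the other root $\sigma+2=m$ of $\alpha(\alpha+n-2)=m(m+n-2)$; your explicit remark that the terms actually fed into the proposition satisfy $\sigma+2<0\leq m$ supplies the (easily verified) justification that this root never occurs.
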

\begin{rem}
	In later proofs, when $n>4$, we shall only use the first case with $i=0$. For $n=4$, both cases are needed.
\end{rem}

\begin{proof}
For simplicity of notations, we set $\lambda_{k}:=k(k+n-2)$ for all $k\in\mathbb{Z}\setminus \set{-1,\cdots,3-n}$ and
$n\ast \sigma:= n+2\sigma+2$.
We first prove the proposition when $\sigma+n+m\neq 0$.
By (\ref{polarharmonic}),
\begin{equation}\label{calculatelaplace}
	\begin{split}
& \triangle_{\mathbb{R}^{n}}(r^{\sigma+2}(\log r)^{j}G_{m})\\
=&(\lambda_{\sigma+2}-\lambda_{m})(r^{\sigma}(\log r)^{j}G_{m}) +j(n\ast \sigma)(r^{\sigma}(\log r)^{j-1}G_{m})\\
& +j(j-1)(r^{\sigma}(\log r)^{j-2}G_{m}).
	\end{split}
\end{equation}
Let $U_{j}=r^{\sigma+2}(\log r)^{j}G_{m}$ for $j=0,1,\cdots,i$, (\ref{calculatelaplace}) shows that
\begin{equation*}
\triangle_{\mathbb{R}^{n}}[U_{0},\cdots, U_{j},\cdots,U_{i}]=r^{-2}[U_{0},\cdots, U_{j},\cdots,U_{i}][A_{0},\cdots,A_{j},\cdots,A_{i}].
\end{equation*}
Here $[U_0,\cdots,U_i]$ is a row vector and each $A_j$ is a column vector, which means that $[A_0,\cdots,A_i]$ is an $(i+1)\times (i+1)$ matrix. By \eqref{calculatelaplace}, we know

(1) $A_{0}=[(\lambda_{\sigma+2}-\lambda_{m}),0,\cdots,0]^{\top} $;

(2) $A_{1}=[n\ast \sigma,(\lambda_{\sigma+2}-\lambda_{m}),0,\cdots,0]^{\top}$;

(3) for $2\leq j\leq i$, $A_{j}=[0,\cdots,0,j(j-1),j(n\ast \sigma),(\lambda_{\sigma+2}-\lambda_{m}),0,\cdots,0]^{\top} $ and the three nonzero numbers are at
$j-1$th, $j$th, and $j+1$th positions.

Since $\sigma+n+m\neq 0$, which implies $\lambda_{\sigma+2}-\lambda_{m}:=\lambda_{\sigma,m} \neq 0$, we have that the matrix
\begin{eqnarray*}
A&:=&[A_{0},\cdots,A_{j},\cdots,A_{i}]_{(i+1)\times(i+1)}\\
&=&\left[  \begin{array}{ccccccc}\lambda_{\sigma,m} & (n\ast \sigma) & 2 & 0&0& \cdots & 0 \\
0 & \lambda_{\sigma,m} & 2(n\ast \sigma)& 6 & 0 &\cdots &0\\
0 & 0 & \lambda_{\sigma,m}& 3(n\ast \sigma) & 12 &\cdots &0\\
\multicolumn{7}{c}\dotfill\\
\multicolumn{7}{c}\dotfill\\
\multicolumn{7}{c}\dotfill\\
0 & \cdots&\cdots& 0& \lambda_{\sigma,m}&(i-1)(n\ast \sigma)&i(i-1)\\
0&\cdots&\cdots&\cdots&0&\lambda_{\sigma,m}&i(n\ast \sigma)\\
0&\cdots&\cdots&\cdots&\cdots&0&\lambda_{\sigma,m}
\end{array}\right]
\end{eqnarray*}
is invertible.
Hence (\ref{poissonsingletermlog}) has a solution of the type of (\ref{solutionlog1}).

When $\sigma+n+m=0$, we need to enlarge the solution space to include
\begin{equation*}
U_{i+1}=r^{\sigma+2}(\log r)^{i+1}G_{m}.
\end{equation*}
Using (\ref{calculatelaplace}) as before, we get
  \begin{equation*}
\triangle_{\mathbb{R}^{n}}[U_{0},\cdots, U_{j},\cdots,U_{i+1}]=r^{-2}[U_{0},\cdots, U_{j},\cdots,U_{i+1}][A_{0},\cdots,A_{j},\cdots,A_{i+1}].
\end{equation*}
Since now we have $\sigma+ n+m=0$, which implies $\lambda_{\sigma+2}-\lambda_{m} =0$, all the diagonal elements in the $(i+2)\times(i+2)$ upper triangular matrix below
\begin{eqnarray*}
\bar{A}&:=&[A_{0},\cdots,A_{j},\cdots,A_{i+1}]_{(i+2)\times(i+2)}\\
&=&\left[  \begin{array}{ccccccc}0 & (n\ast \sigma) & 2 & 0&0& \cdots & 0 \\
0 & 0 & 2(n\ast \sigma)& 6 & 0 &\cdots &0\\
0 & 0 & 0& 3(n\ast \sigma) & 12 &\cdots &0\\
\multicolumn{7}{c}\dotfill\\
\multicolumn{7}{c}\dotfill\\
\multicolumn{7}{c}\dotfill\\
0 & \cdots&\cdots& 0& 0&i(n\ast \sigma)&i(i+1)\\
0&\cdots&\cdots&\cdots&0&0&(i+1)(n\ast \sigma)\\
0&\cdots&\cdots&\cdots&\cdots&0&0
\end{array}\right]
\end{eqnarray*}
are zeros, so it is not invertible.
 And it is easy to see its rank is $i+1$ since
 \begin{equation*}
 n\ast \sigma=n+2\sigma+2=-n-2m+2<0
  \end{equation*}
 by $n\geq4$ and $m\geq0$.
  Moreover because the last row of $\bar{A}$ is a zero matrix, we conclude that the map
\begin{equation*}
 \triangle_{\mathbb{R}^{n}}:\quad \text{SPAN}(\set{U_{0},\cdots, U_{j},\cdots,U_{i+1}})\rightarrow \text{SPAN}(\set{r^{-2}U_{0},\cdots, r^{-2}U_{j},\cdots,r^{-2}U_{i}})
 \end{equation*}
 is surjective.
 Therefore (\ref{poissonsingletermlog}) has a solution of the type of (\ref{solutionlog2}).
\end{proof}

\subsection{The error terms}

The following is a result about the error estimates of Poisson equations.
\begin{lem}\label{corpoissonestimate4}
	Let $G$ be a function defined on $\Real^n\setminus B_R (n>2)$ which is $G=O(-2-p-\epsilon)$ for some $p\in\mathbb{N}$, $p>0$ and $1>\epsilon>0$, then there is a solution $V^{\star}$ of
\begin{equation}\label{equ:poissoonestimate2}
\triangle V^{\star} =G,
\end{equation}
and
	\begin{equation*}
		V^{\star} = O(-p-\epsilon).
	\end{equation*}
\end{lem}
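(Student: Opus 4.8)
The plan is to build $V^\star$ as a Newtonian potential and then subtract a harmonic function so as to reach the sharp decay rate. Since $O(-q)$ concerns only $r\to\infty$, I would first replace $G$ by a source on all of $\mathbb{R}^n$: pick a cutoff $\chi$ with $\chi\equiv 0$ on $B_R$ and $\chi\equiv 1$ off $B_{2R}$ and set $\tilde G=\chi G$ (extended by $0$). As $\chi=O(0)$ and the classes $O(-q)$ are closed under products (see \eqref{ordercomputatation}), $\tilde G$ is smooth on $\mathbb{R}^n$, equals $G$ for $r\geq 2R$, and still satisfies $\tilde G=O(-2-p-\epsilon)$. Letting $\Phi$ denote the fundamental solution of $\triangle$, I then define
\[
V_0(x)=\int_{\mathbb{R}^n}\Phi(x-y)\,\tilde G(y)\,dy,
\]
so that $\triangle V_0=\tilde G$, hence $\triangle V_0=G$ for $r\geq 2R$. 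Convergence is clear: the kernel is locally integrable because $n>2$, and at infinity the integrand is $O(|y|^{2-n}|y|^{-2-p-\epsilon})=O(|y|^{-n-p-\epsilon})$, which is integrable since $p+\epsilon>0$.

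The obstruction is that $V_0$ generically decays only like $r^{2-n}$, too slowly once $p+\epsilon\geq n-2$. To remove this I would peel off the leading multipoles. Writing the kernel as $\Phi(x-y)=\sum_{k\geq 0}Z_k(x,y)$ for $|y|<|x|$, where $Z_k$ is homogeneous of degree $2-n-k$ in $x$ (an exterior solid harmonic) and of degree $k$ in $y$, let $K$ be the largest integer with $n-2+K\leq p+\epsilon$ and set
\[
H(x)=\sum_{k=0}^{K}\int_{\mathbb{R}^n}Z_k(x,y)\,\tilde G(y)\,dy.
\]
Each coefficient converges exactly for $k\leq K$, because $\int|y|^k|\tilde G|\,dy$ is finite iff $k<p+\epsilon-n+2$, and the non-integer value of $p+\epsilon$ (coming from $0<\epsilon<1$) keeps $k=K$ strictly below this threshold. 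Thus $H$ is a well-defined harmonic function, and $V^\star:=V_0-H$ still solves $\triangle V^\star=G$ for $r\geq 2R$.

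The core of the argument is the weak bound $|V^\star(x)|\leq C\,r^{-p-\epsilon}$, which is precisely Meyers' expansion about infinity \cite{Mey}. I would prove it by writing $V^\star=\int_{\mathbb{R}^n}\bigl(\Phi(x-y)-\sum_{k\leq K}Z_k(x,y)\bigr)\tilde G(y)\,dy$ and splitting into $\{|y|<r/2\}$, $\{r/2\leq|y|\leq 2r\}$ and $\{|y|>2r\}$. On the inner region the bracket equals the tail $\sum_{k>K}Z_k$, whose leading term $r^{2-n-(K+1)}\int_{|y|<r/2}|y|^{K+1}|\tilde G|\,dy$ is $O(r^{-p-\epsilon})$; on the two outer regions I estimate the two kernels separately, using $\int_{|x-y|<3r}|x-y|^{2-n}\,dy\sim r^2$, $\int_{|y|>2r}|y|^{-n-p-\epsilon}\,dy\sim r^{-p-\epsilon}$, and $|Z_k(x,y)|\leq C|x|^{2-n-k}|y|^k$. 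Every piece comes out $O(r^{-p-\epsilon})$, the borderline moment integrals converging exactly because $0<\epsilon<1$; this delicate bookkeeping at the integer threshold is the step I expect to be the main obstacle, and it is exactly what Meyers' theorem supplies.

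Finally I would upgrade this to the strong decay demanded by the $O(-p-\epsilon)$ notation by a scaling/Schauder argument. On a dyadic annulus $\{\rho<|x|<2\rho\}$ I set $x=\rho z$ and $\tilde V(z)=V^\star(\rho z)$; then $\triangle_z\tilde V=\rho^2 G(\rho z)$, whose $C^{k,\alpha}$ norm on $\{1/2<|z|<4\}$ is $\leq C\rho^{-p-\epsilon}$ by the strong decay of $G$, while $\|\tilde V\|_\infty\leq C\rho^{-p-\epsilon}$ by the weak bound just proved. Interior Schauder estimates give $\|\tilde V\|_{C^{j}(\{1\leq|z|\leq 2\})}\leq C\rho^{-p-\epsilon}$ for every $j$, and rescaling yields $|D^jV^\star(x)|\leq C(j)\,r^{-p-\epsilon-j}$, i.e. $V^\star=O(-p-\epsilon)$.
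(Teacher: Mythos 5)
Your proof is correct, but it takes a genuinely different route from the paper. The paper disposes of this lemma entirely by citation: existence of a solution with the weak bound $V^{\star}=O(r^{-p-\epsilon})$ comes from Meyers' Lemma 5 (Lemma \ref{equ:poissoonestimate}, in the case $q=0$, $0<\epsilon<1$), and the derivative bounds $D^{j}V^{\star}=O(r^{-p-\epsilon-j})$ come from Meyers' Theorem 1 and Corollary 1 (Lemma \ref{corderivativestimates}) after checking $|G|^{R}_{r^{-2}\mu,i+\alpha}=O(1)$ with $\mu=r^{-p-\epsilon}$. You instead reprove the relevant case of Meyers' lemma from scratch: a cut-off Newtonian potential minus the finitely many exterior multipoles $Z_{k}$, $k\le K$, that decay more slowly than $r^{-p-\epsilon}$, with the weak bound obtained by the three-region kernel splitting; and you replace Meyers' derivative theorem by a scaled interior Schauder estimate on dyadic annuli. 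Both steps check out: the choice of $K$ as the largest integer with $n-2+K\le p+\epsilon$ makes the moments $\int|y|^{k}|\tilde G|$ converge exactly for $k\le K$ (since $0<\epsilon<1$ keeps $p+\epsilon-n+2$ non-integral), the inner-region moment $\int_{|y|<r/2}|y|^{K+1}|\tilde G|$ grows like $r^{K+n-1-p-\epsilon}$ so that its product with $r^{1-n-K}$ is indeed $r^{-p-\epsilon}$, and the rescaled source $\rho^{2}G(\rho z)$ has all $C^{j}$ norms bounded by $\rho^{-p-\epsilon}$ precisely because the hypothesis $G=O(-2-p-\epsilon)$ controls all derivatives of $G$. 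What your version buys is self-containedness and a transparent explanation of why the restriction $0<\epsilon<1$ matters (it keeps the critical moment integrals off the logarithmic borderline, which is exactly the case distinction in Meyers' lemma that the paper invokes); what it costs is length, plus the minor caveat that your $V^{\star}$ solves the equation only on $\{r\ge 2R\}$ rather than on all of $\mathbb{R}^{n}\setminus B_{R}$ --- harmless for the bootstrapping in Section \ref{sec:pro}, which only concerns asymptotics at infinity, but worth a sentence.
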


This lemma follows directly from the results of \cite{Mey} which is devoted to the study of the expansion for
 solutions of linear elliptic equations.

Next we introduce some results in \cite{Mey} which are essential to the proof of Lemma \ref{corpoissonestimate4}.
Let $ \mu=r^{-p}$ for some $p>0$. Now for any function $U$ which is $i$ times differentiable and
$|x|\geq R\geq2$, define
\begin{eqnarray*}
	& &H^{i}_{\alpha}[x,U]=\sup_{\{y;|x-y|<\frac{1}{2}|x|\}} \frac{D^{i}U(x)-D^{i}U(y)}{|x-y|^{\alpha}}
   \,\,\,(0<\alpha<1),\\
& &|U|^{R}_{\mu,i}=\sum^{i}_{j=0}\sup_{|x|\geq R}\frac{1}{\mu(|x|)}|x|^{i}|D^{i}U(x)|,\\
& &	|U|^{R}_{\mu,i+\alpha}=|U|^{R}_{\mu,i}+\sup_{|x|\geq R}\frac{1}{\mu(|x|)}|x|^{i+\alpha}H^{i}_{\alpha}[x,U].
\end{eqnarray*}

It is not hard to check that if $f=O(-p),$
then $|f|^{R}_{r^{-p},i+\alpha}=O(1)$ for all $i\in\mathbb{N}\cup\set{0}$ as $R\rightarrow \infty$.
It is noted here that $ O(1)$ takes the usual meaning in standard mathematical analysis textbooks.
Indeed if $f=O(-p),$ then for all $i\in\mathbb{N}\cup\set{0}$,
we have $|r^{j}D^{j}f|\leq C(j)r^{-p}$ as $x\rightarrow \infty$ for all $0\leq j\leq i+1$. So there exists some $R_0>0$ such that if
$R>R_0$, then
$$\sup_{|x|\geq R}|x|^{j}|D^{j}f(x)|\leq C(j)r^{-p}\; \text{for all}\; 0\leq j\leq i+1,$$
which implies
\begin{eqnarray*}
	|f|^{R}_{\mu,i+1}=\sum^{i+1}_{j=0}\sup_{|x|\geq R}\frac{1}{r^{-p}}|x|^{j}|D^{j}f(x)|\leq C(i)
\end{eqnarray*}
for some $C(i)>0$.
The last inequality above clearly shows that $|f|^{R}_{r^{-p},i+\alpha}=O(1)$ for all $i\in\mathbb{N}\cup\set{0}$ as $R\rightarrow \infty$.

\begin{lem}(Theorem 1. and Corollary 1. in \cite{Mey})
\label{corderivativestimates}
Let $U$ be a solution of
$\triangle U=f$ on $\mathbb{R}^{n}\setminus B_{1}$, if
\begin{eqnarray*}
U=O(\mu(|x|))\,\, and\,\,|f|^{R}_{r^{-2}\mu,i+\alpha}=O(1)\,\, as\,\,R\rightarrow \infty,
\end{eqnarray*}
then
\begin{eqnarray*}
D^{j}U=O(\mu(|x|)|x|^{-j}),\,\,\forall j\leq i+2.
\end{eqnarray*}
\end{lem}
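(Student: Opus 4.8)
The plan is to prove this as a weighted interior Schauder estimate by a rescaling (blow-up at infinity) argument, reducing the weighted statement on the exterior region to the classical interior Schauder estimate for $\triangle$ on a single fixed ball. Since $\mu(|x|)=r^{-p}$, the conclusion $D^{j}U=O(\mu(|x|)|x|^{-j})$ is equivalent to the pointwise bound $|D^{j}U(x)|\le C|x|^{-p-j}$ for $j\le i+2$ and all large $|x|$. It therefore suffices to establish this bound at each fixed point $x_{0}$ with $r:=|x_{0}|$ large, with a constant $C$ that does not depend on $x_{0}$.

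First I would set up the dilation. Fix $x_{0}$ with $r=|x_{0}|$ large enough that $B_{r/4}(x_{0})\subset\Real^{n}\setminus B_{1}$ (any $r>2$ works, since points of $B_{r/4}(x_{0})$ have modulus at least $3r/4$). Define on the fixed ball $B_{1/4}(0)$ the rescaled functions
\begin{equation*}
\tilde U(y):=U(x_{0}+ry),\qquad \tilde f(y):=r^{2}f(x_{0}+ry),
\end{equation*}
so that $\triangle_{y}\tilde U=\tilde f$ on $B_{1/4}(0)$. The two hypotheses translate into bounds for $\tilde U$ and $\tilde f$ that are uniform in $x_{0}$. Indeed $U=O(\mu)$ gives $\|\tilde U\|_{C^{0}(B_{1/4})}\le Cr^{-p}$, while the assumption $|f|^{R}_{r^{-2}\mu,i+\alpha}=O(1)$ unwinds, for $j\le i$, to $|D^{j}f(x)|\le C|x|^{-2-p-j}$ and to $H^{i}_{\alpha}[x,f]\le C|x|^{-2-p-i-\alpha}$. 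Using $D^{j}_{y}\tilde f(y)=r^{2+j}(D^{j}f)(x_{0}+ry)$ together with the fact that $|x_{0}+ry|$ is comparable to $r$ on $B_{1/4}$, each derivative of $\tilde f$ picks up exactly the scaling factor that cancels against the weight, yielding $\|\tilde f\|_{C^{i,\alpha}(B_{1/4})}\le Cr^{-p}$; the Hölder seminorm requires the extra factor $r^{\alpha}$ coming from $|y-y'|^{\alpha}=r^{-\alpha}|x-x'|^{\alpha}$, which again matches the weight exponent.

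With these uniform bounds in hand I would invoke the standard interior Schauder estimate for the Laplacian,
\begin{equation*}
\|\tilde U\|_{C^{i+2,\alpha}(B_{1/8})}\le C\big(\|\tilde U\|_{C^{0}(B_{1/4})}+\|\tilde f\|_{C^{i,\alpha}(B_{1/4})}\big)\le Cr^{-p},
\end{equation*}
where $C$ is universal because the underlying ball is fixed. In particular $\|D^{j}_{y}\tilde U\|_{C^{0}(B_{1/8})}\le Cr^{-p}$ for all $j\le i+2$. Undoing the scaling via $D^{j}_{y}\tilde U(0)=r^{j}(D^{j}U)(x_{0})$ gives $|D^{j}U(x_{0})|\le Cr^{-p-j}=C\mu(r)r^{-j}$. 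As $x_{0}$ ranges over all points of large modulus with the same constant, this is precisely $D^{j}U=O(\mu(|x|)|x|^{-j})$ for $j\le i+2$.

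The only real work is the bookkeeping of the weight exponents through the dilation: checking that the $r^{2}$ produced by $\triangle$ under rescaling combines with the $r^{-2}\mu$ weight on $f$ to reproduce exactly the $r^{-p}$ scale of $U$, and that the Hölder term carries the matching $r^{i+\alpha}$ factor. Everything else is the classical interior estimate, whose constant is automatically uniform in $x_{0}$ once the problem has been normalized to the unit ball; this uniformity is what converts the local regularity into global weighted decay. Meyers' argument in \cite{Mey} is of this nature, phrased directly in the weighted norms $|\cdot|^{R}_{\mu,i+\alpha}$.
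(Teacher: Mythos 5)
Your scaling argument is correct and self-contained, whereas the paper offers no proof of this lemma at all: it is quoted verbatim as Theorem 1 and Corollary 1 of Meyers' paper \cite{Mey}, and the text around it only verifies that the hypothesis $|f|^{R}_{r^{-2}\mu,i+\alpha}=O(1)$ is met in the application. So you have actually supplied the argument the paper outsources. Your reduction --- dilating by $r=|x_{0}|$, checking that the factor $r^{2+j}$ (resp.\ $r^{2+i+\alpha}$ for the H\"older seminorm) from $\triangle$ and the chain rule exactly cancels the weight $r^{-2}\mu|x|^{-j}$ so that $\|\tilde U\|_{C^{0}}$ and $\|\tilde f\|_{C^{i,\alpha}}$ are both $O(r^{-p})$ uniformly in $x_{0}$, then applying the interior Schauder estimate on a fixed ball and undoing the scaling --- is the standard route to such weighted estimates and is essentially what Meyers does in the weighted norms $|\cdot|^{R}_{\mu,i+\alpha}$. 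One small point of bookkeeping deserves a sentence: the seminorm $H^{i}_{\alpha}[x,f]$ in the paper's definition only ranges over pairs with $|x-y|<\tfrac12|x|$, while two points of $B_{r/4}(x_{0})$ can be as far apart as $r/2>\tfrac12\cdot\tfrac{3r}{4}$; this is harmless because for such far-apart pairs the difference quotient is already controlled by the sup bound $|D^{i}f|\le C|x|^{-p-2-i}$ divided by $(3r/8)^{\alpha}$, or one can simply work on $B_{r/8}(x_{0})$ from the outset. With that noted, the proof is complete and uniform in $x_{0}$, which is exactly what converts the local Schauder estimate into the asserted global decay $D^{j}U=O(\mu(|x|)|x|^{-j})$ for $j\le i+2$.
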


\begin{lem}\label{equ:poissoonestimate}
( Lemma 5 in \cite{Mey}) Consider the equation
$$\triangle V=G=O(r^{-2-p-\epsilon}(\log r)^{q})$$ on $\mathbb{R}^{n}\setminus B_{1}$, and G
 is in $\mathcal{C}^{\alpha}$, $ p\in\mathbb{Z},q\in\mathbb{N}\cup \set{0}$ and $0\leq\epsilon<1$. For $n>2$, the equation has a solution
 \begin{eqnarray*}
 V^{\star}(x)=\left\{\begin{array}{cc}O(r^{-p-\epsilon}(\log r)^{q})&\quad\text{if}\,\, 0<p<n-2\,\, \text{or}\,\, 0<\epsilon<1,\\
O(r^{-p}(\log r)^{q+1})&\quad \text{otherwise}.
\end{array} \right.
\end{eqnarray*}
\end{lem}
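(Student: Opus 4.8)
The plan is to produce one explicit solution by convolving $G$ with the fundamental solution $\Phi(x)=c_{n}|x|^{2-n}$ of $\triangle$ on $\mathbb{R}^{n}$ ($n>2$) and to read its decay directly off the hypothesis. First extend $G$ across $B_{1}$ to a $\mathcal{C}^{\alpha}$ function on all of $\mathbb{R}^{n}$ with the same asymptotics, and set $V^{\star}(x)=\int_{\mathbb{R}^{n}}\Phi(x-y)G(y)\,dy$; since two such extensions differ by the Newtonian potential of a source supported in $B_{1}$, which is harmonic on the exterior, the equation $\triangle V^{\star}=G$ holds on $\mathbb{R}^{n}\setminus B_{1}$ independently of the choice. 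The guiding heuristic is that $\triangle^{-1}$ improves decay by exactly two orders, so the \emph{expected} rate of $V^{\star}$ is $r^{-p-\epsilon}(\log r)^{q}$, two better than the source rate; the whole content of the lemma is to decide when this is achieved and when an extra $\log r$ is forced.

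To make this rigorous I would bound $\abs{V^{\star}}$ by splitting the integral over the annular regions $\set{|y|<r/2}$, $\set{r/2\le|y|\le 2r}$ and $\set{|y|>2r}$, where $r=|x|$. On the middle annulus $|x-y|^{2-n}$ is locally integrable (here $n>2$ is used) and $G$ has essentially the constant size $r^{-2-p-\epsilon}(\log r)^{q}$, so integrating $|x-y|^{2-n}$ over a ball of radius comparable to $r$ gains a factor $r^{2}$ and reproduces the target size $r^{-p-\epsilon}(\log r)^{q}$. On the near and far annuli $|x-y|$ is comparable to $r$ or to $|y|$, and in polar coordinates the bounds reduce to one-dimensional integrals $\int\rho^{-1-p-\epsilon}(\log\rho)^{q}\,d\rho$ (far) and $r^{2-n}\int\rho^{n-3-p-\epsilon}(\log\rho)^{q}\,d\rho$ (near). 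When slow decay makes the far integral diverge ($p\le 0$) I would subtract finitely many Taylor terms of $x\mapsto\Phi(x-y)$ at $x=0$, and when fast decay leaves spurious low multipoles ($p$ large) I would subtract the leading multipoles of $V^{\star}$; in both cases the subtracted functions are harmonic on the exterior and so preserve $\triangle V^{\star}=G$.

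The dichotomy now comes from whether these one-dimensional primitives are honest powers of $\rho$ or are logarithmic. Each is a power times $(\log\rho)^{q}$ unless its exponent equals $-1$, in which case integration yields $(\log\rho)^{q+1}$. The far integral is critical exactly when $p+\epsilon=0$ and the near integral exactly when $p+\epsilon=n-2$; since $\epsilon\in[0,1)$, this forces $\epsilon=0$ together with $p=0$ or $p=n-2$. More conceptually, the target exponent $-p-\epsilon$ is resonant precisely when it is a harmonic exponent, i.e.\ when it lies in $\set{0,1,2,\dots}\cup\set{2-n,1-n,\dots}$, which happens exactly for $\epsilon=0$ and $p\le 0$ or $p\ge n-2$; in that range $r^{-p}$ times a spherical harmonic of the matching degree is itself harmonic, which is the exceptional condition \eqref{exceptionaldef}. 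By Proposition \ref{solutionlog}, inverting $\triangle$ against such a mode forces exactly one extra factor of $\log r$, producing $r^{-p}(\log r)^{q+1}$, whereas off resonance (the cases $0<p<n-2$ or $0<\epsilon<1$) the clean rate $r^{-p-\epsilon}(\log r)^{q}$ persists.

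I expect the main obstacle to be controlling the resonant contribution for a general $\mathcal{C}^{\alpha}$ source $G$, which is \emph{not} a finite combination of the model terms $r^{\sigma}(\log r)^{i}G_{m}$, and in particular showing that the extra logarithm appears with power exactly $q+1$ and never higher, uniformly across the infinitely many spherical modes. The natural way to pin this down is the mode-by-mode Euler equation: projecting $\triangle V=G$ onto $\mathcal{H}_{m}(S^{n-1})$ gives $v''+\frac{n-1}{r}v'-\frac{m(m+n-2)}{r^{2}}v=g_{m}(r)$, with homogeneous solutions $r^{m}$ and $r^{2-n-m}$, and variation of parameters shows that a forcing of size $\rho^{-2-p}(\log\rho)^{q}$ raises the log power by one only when $-p$ equals $m$ or $2-n-m$, every other mode giving the clean rate with constants summable in $m$ against the decay of $G$. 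Combining the potential-theoretic size bound with this finite list of resonant modes yields the two-case estimate.
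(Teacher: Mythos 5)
The paper offers no proof of this statement: it is imported verbatim as Lemma 5 of Meyers \cite{Mey}, so there is no internal argument to compare yours against. Judged on its own terms, your reconstruction follows the classical route (which is, in substance, Meyers' own): represent a solution by the Newtonian potential of $G$, split into near, middle and far regions, renormalize by harmonic correctors when the radial integrals diverge, and read the dichotomy off whether the critical exponent $-1$ is hit. Your identification of the resonant set --- $\epsilon=0$ together with $p\le 0$ or $p\ge n-2$, i.e.\ $-p-\epsilon$ a harmonic exponent --- matches the lemma's ``otherwise'' clause exactly, and the middle-annulus bound correctly uses $n>2$ for the local integrability of $|x-y|^{2-n}$. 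So the approach is the right one and the outline is sound.

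Two points need more care than you give them, and they are where the actual work of the lemma lives. First, as written your correctors are integrated over the $x$-dependent regions $\set{|y|>2r}$ and $\set{|y|<r/2}$; such integrals are not harmonic in $x$ even though the integrands are, so the subtraction must be performed over a fixed region (say $|y|>2$), and it is precisely the divergence of the resulting coefficient integrals, $\int_1^r\rho^{-1}(\log\rho)^q\,d\rho=(q+1)^{-1}(\log r)^{q+1}$, at resonance that produces the single extra logarithm. This is where ``$q+1$ and not more'' is actually proved; it cannot be delegated to Proposition \ref{solutionlog}, which only treats a single spherical mode $r^\sigma(\log r)^iG_m$, not a general $\mathcal{C}^\alpha$ source. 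Second, the mode-by-mode Euler ODE you invoke as a backup does not obviously close for a source that is merely H\"older continuous: the spherical-harmonic coefficients $g_m$ need not be absolutely summable against the constants that variation of parameters produces, which is exactly why the potential representation, not the eigenfunction expansion, should carry the estimate. Neither point reflects a wrong idea, but both must be filled in before the proposal is a proof.
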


We remark here that we only use the case $q=0$ and $\epsilon>0$ in Lemma \ref{equ:poissoonestimate}.
Now we give the proof of Lemma \ref{corpoissonestimate4}.

\begin{proof}
Since by the assumption of Lemma \ref{corpoissonestimate4}
  $G=O(-2-p-\epsilon)(\epsilon>0)$, then by Lemma \ref{equ:poissoonestimate} it turns out (\ref{equ:poissoonestimate2}) has a solution $$ V^{\star}(x)= O(r^{-p-\epsilon}) .$$

Since $G=O(-2-p-\epsilon)$,
 if we take $\mu=r^{-p-\epsilon}$, then  $$|G|^{R}_{r^{-2}\mu,i+\alpha}=O(1),$$
  as $R\rightarrow \infty$, for all $i\in\mathbb{N}$.
Finally by using Lemma \ref{corderivativestimates} we obtain the desired estimates,
 \begin{eqnarray*}
D^{j}V^{\star}(x)=O(r^{-p-j-\epsilon}),\quad \text{for all} \,\,j\in\mathbb{N}.
\end{eqnarray*}
Hence $V^{\star}(x)=O(-p-\epsilon)$.

\end{proof}

\section{Structure Analysis Of The Ricci Flat Equation}\label{sec:str}

\subsection{The starting point of the bootstrap argument}

First we show that every Ricci flat ALE metric under the condition of Theorem \ref{MAINTHM} has an expansion up to order $1-n$
in the sense Definition \ref{defexpansionintr}. This is the starting point of the bootstrapping argument.

From Theorem \ref{MAINTHM1}, we know $g_{ij}-\delta_{ij}\in \mathcal{C}^{\infty}_{1-n} $. Immediately, we have
\begin{equation}\label{stepzero}
 g_{ij}-\delta_{ij}=O(1-n).
\end{equation}

Now we refine the above asymptotical behavior.
First, by using (\ref{eqn:laplace oporater}) we rewrite (\ref{equricciflat}) to be
 \begin{eqnarray}\label{equricflatnew}
  &&\triangle_{\mathbb{R}^{n}}(g_{ij}-\delta_{ij}) \nonumber\\
 &=&-(g^{kl}-\delta^{kl})\partial_{kl}(g_{ij}-\delta_{ij})+Q(g,\partial g):=RHS_{ij},
 \end{eqnarray}
where
\begin{eqnarray} \label{eqn:nonliearterm}
& &Q(g,\partial g) \nonumber  \\
&=&-\sum_{k,l,p,q} g^{pq}g_{lj}\partial_{p} g_{ik}\partial_{q} g^{kl}
      -\sum_{k,l,p,q,r,s} 2g_{ik}g_{lj} g^{pq}g^{rs}\Gamma^{k}_{pr}\Gamma^{l}_{qs}.
\end{eqnarray}
Second, we plug (\ref{stepzero}) into the right hand side of (\ref{equricflatnew}), and by direct computation
using (\ref{ordercomputatation}) we get
\begin{equation*}
RHS_{ij}=O(-2n).
\end{equation*}
Third, we solve the equation
\begin{equation*}
\triangle_{\mathbb{R}^{n}}u=RHS_{ij}
\end{equation*}
on $ \mathbb{R}^n \setminus B_R$. We remark here that all the Poisson equations we are going to deal with later are formulated on $ \mathbb{R}^{n}\setminus B_{R}$.
By Lemma \ref{corpoissonestimate4}, it has a solution $ v^{l}_{ij}=O(l)$ for some $2-2n<l<3-2n$, since
$RHS_{ij}=O(-2n)$ implies $RHS_{ij}=O(l-2)$.

Therefore
\begin{equation}\label{firsterms}
g_{ij}-\delta_{ij}-v^{l}_{ij}:=H_{ij}
\end{equation}
is a harmonic function of order $r^{1-n}$.
Thus $H_{ij}\in SPAN(\mathcal{T}_{1})$ due to Lemma \ref{lem:harmonic expansion} and the definition of $\mathcal{T}_{1}$.

\begin{rem}\label{rem1thexpansion}
 If we write $H_{ij}=a_{1,ij} r^{-(n-1)}G_{1}+\tilde{H}_{ij}$ and set $w_{ij}=v^{l}_{ij}+\tilde{H}_{ij}$, where $\tilde{H}_{ij} $
 is of order $r^{-n}$, then
\begin{eqnarray*}\label{firsterrorterm}
g_{ij}-\delta_{ij}&=&a_{1,ij}r^{-(n-1)}G_{1}+w_{ij} ,\nonumber\\
D^{k}w_{ij}&=&O(r^{1-n-k-\epsilon})
\end{eqnarray*}
for all $k\in\mathbb{N}\cup \set{0}$ and some $\epsilon>0 .$
To put it another way, $g_{ij}-\delta_{ij}$ has an expansion with terms in $\mathcal{T}_{1}$ up to order $-(n-1)$ in the sense Definition \ref{defexpansionintr}. Notice that $\mathcal{T}_{1}\subset \mathcal{T}  (\widetilde{\mathcal{T} })$.
So we have the starting point for the bootstrapping.
\end{rem}

Actually we have from (\ref{firsterms}),
\begin{rem}\label{remaftermain}
 The expansion up to order $3-2n$ for
$g_{ij}-\delta_{ij} $
is of the form
 \begin{equation*}
g_{ij}-\delta_{ij}=\sum_{j=1}^{n-1}r^{-j-(n-2)}G_{j}+O(3-2n-\epsilon),
\end{equation*}
  for some $\epsilon>0 ,$ where $G_j\in \mathcal{H}_{j}(S^{n-1})$.
 That is to say the first $n-1$ terms in the expansion are all harmonic functions.
 \end{rem}

\subsection{The structure of the right hand side of the equation}\label{structure}

The purpose of this subsection is to study the $RHS$ of (\ref{equricflatnew}) under the assumption that $g_{ij}-\delta_{ij}$ has an expansion of order $-q_{N}:=N(1-n)$ ($N\in\mathbb{N}$) in the sense of Definition \ref{defexpansionintr}.

For the sake of simplicity of the following argument, if $A$ is a set of functions (e.g.; $\mathcal{T}$ or $\widetilde{\mathcal{T}}$),
we define $\mathcal{L}_{A\times A}:=\text{SPAN}(\set{uv; u,v \in A})$. Namely $\mathcal{L}_{A\times A}$ is the linear space generated by all functions of the form $uv$, where $u,v \in A$.

\begin{lem}\label{struresult}
Let $A$ be either $\mathcal T$ or $\widetilde{\mathcal T}$. Suppose that $g_{ij}-\delta_{ij}$ has an expansion in terms of $A$ up to order $-q_{N}$ in the sense of Definition \ref{defexpansionintr}. Then the $RHS$ in \eqref{equricflatnew} has an expansion in terms of $\mathcal{L}_{A\times A}$ up to order $-(q_{N}+n+1)$ in the same sense.
To put it in another way, $RHS_{ij}$ has the form
\begin{equation*}\label{righthand}
RHS_{q_{N},ij}+O(-(q_{N}+n+1)-\epsilon),
\end{equation*}
where $RHS_{q_{N},ij}$ is a finite linear combination in terms of the form
\begin{equation}\label{righthand1}
T_{1}\cdot T_{2} \quad \text{where} \;T_{i}\in A,i=1,2.
\end{equation}
\end{lem}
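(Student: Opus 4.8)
The plan is a careful bookkeeping argument that treats separately the two structurally different pieces of $RHS_{ij}$ in \eqref{equricflatnew}: the term $-(g^{kl}-\delta^{kl})\partial_{kl}(g_{ij}-\delta_{ij})$ and the genuinely quadratic term $Q(g,\partial g)$ in \eqref{eqn:nonliearterm}. The governing principle is that every monomial occurring in $RHS_{ij}$ carries at least two \emph{perturbation} factors, that is, factors built from $g_{\bullet\bullet}-\delta_{\bullet\bullet}$, $g^{\bullet\bullet}-\delta^{\bullet\bullet}$, or their derivatives, each of which decays and hence lies (up to error) in $\text{SPAN}(A)$. It is this double vanishing, together with the two extra derivatives hidden in each monomial, that produces the gain of exactly $n+1$ orders.

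First I would record the ingredients with their leading orders. By hypothesis $g_{ij}-\delta_{ij}$ has an expansion in $A$ up to order $-q_N$ and, by Remark \ref{struorder}, is $O(1-n)$; Lemma \ref{leminversemetric} then gives that $g^{ij}-\delta^{ij}$ has an expansion in $A$ up to order $-q_N$ and is likewise $O(1-n)$. Since $\text{SPAN}(A)$ is closed under differentiation (Lemma \ref{finalset5} for $A=\mathcal T$, Lemma \ref{lem:4dimset} for $A=\widetilde{\mathcal T}$), the first derivatives $\partial_k(g_{ij}-\delta_{ij})$ and $\partial_k(g^{ij}-\delta^{ij})$ have expansions in $A$ up to order $-q_N-1$ and are $O(-n)$, while the second derivatives $\partial_{kl}(g_{ij}-\delta_{ij})$ have expansions up to order $-q_N-2$ and are $O(-1-n)$; here I use the second rule in \eqref{ordercomputatation} to differentiate the error. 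Writing $\Gamma^{k}_{pr}=\tfrac12 g^{ks}(\partial_p g_{rs}+\partial_r g_{ps}-\partial_s g_{pr})$ and invoking closedness of $\text{SPAN}(A)$ under multiplication (Remark \ref{productexp}), each Christoffel symbol has an expansion in $A$ up to order $-q_N-1$ and is $O(-n)$, its main part being an element of $\text{SPAN}(A)$ coming from the $\delta$-part of $g^{ks}$ times a derivative of the metric.

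The structural observation I would then isolate is that $\mathcal{L}_{A\times A}$ is stable under multiplication by $\text{SPAN}(A)$ and by constants: if $T_1,T_2\in A$ and $S\in\text{SPAN}(A)$, then $T_1 T_2 S=T_1\,(T_2 S)\in\mathcal{L}_{A\times A}$ because $T_2 S\in\text{SPAN}(A)$. Consequently a product of several factors, each lying in $\text{SPAN}(A)$ or equal to a constant $\delta_{\bullet\bullet}$, belongs to $\mathcal{L}_{A\times A}$ as soon as at least two of the factors lie in $\text{SPAN}(A)$. Applying this to the product expansions of the two pieces of $RHS_{ij}$ (via Remark \ref{productexp}) shows that the main part of $RHS_{ij}$ lands in $\mathcal{L}_{A\times A}$: in the first term the two genuine $\text{SPAN}(A)$ factors are $g^{kl}-\delta^{kl}$ and $\partial_{kl}(g_{ij}-\delta_{ij})$; in the first sum of $Q$ they are $\partial_p g_{ik}$ and $\partial_q g^{kl}$, the remaining factors $g^{pq},g_{lj}$ only splitting as constant plus $\text{SPAN}(A)$; in the second sum they are $\Gamma^{k}_{pr}$ and $\Gamma^{l}_{qs}$.

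Finally I would verify the error order, where the $n+1$ gain is made explicit. Expanding each product, the worst error arises by replacing one factor by its error and the rest by their leading orders, and the maximizing choice is always the error of a perturbation factor against the leading orders of the others. For the first term this gives $O(1-n)\cdot O(-q_N-2-\epsilon)$ or $O(-q_N-\epsilon)\cdot O(-1-n)$, both equal to $O(-(q_N+n+1)-\epsilon)$. For the first sum of $Q$ the dominant contribution is $O(-q_N-1-\epsilon)\cdot O(-n)$, the error of one derivative factor against the leading $O(-n)$ of the other, again $O(-(q_N+n+1)-\epsilon)$; the analogous estimate for the second sum uses the Christoffel error $O(-q_N-1-\epsilon)$ and leading order $O(-n)$. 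The pure double-error cross terms are even smaller, since $2q_N+2-(q_N+n+1)=(N-1)(n-1)\ge 0$. Collecting these via the product and sum rules in \eqref{ordercomputatation} yields $RHS_{ij}=RHS_{q_N,ij}+O(-(q_N+n+1)-\epsilon)$ with $RHS_{q_N,ij}\in\mathcal{L}_{A\times A}$. The main obstacle I anticipate is organizational rather than conceptual: keeping uniform, across all the index sums in $Q$, the accounting of which factor supplies the error versus the leading order, and confirming that every monomial really contains two full $\text{SPAN}(A)$ factors so that the main part lands in $\mathcal{L}_{A\times A}$ and not merely in $\text{SPAN}(A)$.
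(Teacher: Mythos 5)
Your proposal is correct and follows essentially the same route as the paper: the same decomposition of $RHS_{ij}$ into $-(g^{kl}-\delta^{kl})\partial_{kl}g_{ij}$, $Q_{1,ij}$ and $Q_{2,ij}$, the same tabulation of main parts in $\text{SPAN}(A)$ with leading orders $O(1-n)$, $O(-n)$, $O(-n-1)$ and expansion errors $O(-q_N-\epsilon)$, $O(-q_N-1-\epsilon)$, $O(-q_N-2-\epsilon)$ (including the treatment of $\Gamma$), and the same observation that each monomial carries two genuine perturbation factors so the main part lands in $\mathcal{L}_{A\times A}$ while the cross terms give exactly the $O(-(q_N+n+1)-\epsilon)$ error. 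Your explicit remark that $\mathcal{L}_{A\times A}$ is stable under multiplication by $\text{SPAN}(A)$ is a slightly cleaner packaging of what the paper verifies term by term, but it is not a different argument.
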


\begin{proof}
 By the assumption, there is $U_{q_{N},ij}$ in $\mbox{SPAN}(A)$ such that
\begin{equation*}
	g_{ij}-\delta_{ij}= U_{q_{N},ij} + O(-q_{N}-\epsilon)
\end{equation*}
for some $\epsilon>0$.
By Lemma \ref{leminversemetric}, Lemma \ref{finalset5} and Lemma \ref{lem:4dimset}, $g^{ij}-\delta_{ij}$ also has an expansion in $A$ up to order $-q_{N}$. Since we do not care about the particular form of $U_{q_{N},ij}$, we write
\begin{equation*}
	g^{ij}= \delta_{ij} + U_{q_{N},ij} + O(-q_{N}-\epsilon).
\end{equation*}
 By Remark \ref{struorder} which implies $U_{q_{N},ij}=O(1-n)$ and the fact $\partial_{x_i}O(-p)=O(-(p+1))$ for any $p>0$,
similarly, we summarize the consequence of some direct computation  using Lemma \ref{finalset5} and Lemma \ref{lem:4dimset} as follows
\begin{enumerate}[(A)]
	\item
		\begin{equation*}
 g_{ij},\,g^{ij}=\delta_{ij}+U_{q_{N},ij}+O(-q_{N}-\epsilon),
			\label{metricformal}
		\end{equation*}
		where $U_{q_{N},ij}\in \mbox{SPAN}(A)$ and in particular $U_{q_{N},ij}=O(1-n)$;
	\item
		\begin{equation*}
			\partial_{x_l}g_{ij},\partial_{x_l}g^{ij}=\hat{V}_{q_{N},ij}+O(-q_{N}-1-\epsilon),
			\label{metricformal1}
		\end{equation*}
		where $\hat{V}_{q_{N},ij}\in \mbox{SPAN}(A)$ and $\hat{V}_{q_{N},ij}=O(-n)$;
	\item
		\begin{equation*}
			\partial^2_{kl}g_{ij}=V_{q_{N},ij}+O(-q_{N}-2-\epsilon),
			\label{metricformal2}
		\end{equation*}
		where $V_{q_{N},ij}\in \mbox{SPAN}(A)$ and $V_{q_{N},ij}=O(-n-1)$.
\end{enumerate}

The $RHS$ contains three types of terms, which we study one by one.
Direct calculation using (\ref{metricformal}) and (\ref{metricformal2}) shows that $-(g^{kl} -\delta^{kl})\partial_{kl}(g_{ij})$, which is the first term in the right hand side of (\ref{equricflatnew}), has the form of
\begin{equation}\label{nonlinear1}
 \begin{split}
-(g^{kl} -\delta^{kl})\partial^2_{kl}(g_{ij})&= W_{q_{N},ij}+O(-(q_{N}+n+1)-\epsilon),\\
\text{where} \quad W_{q_{N},ij}&\in \mathcal{L}_{A\times A},
\end{split}
\end{equation}
since $U_{q_{N},ij}$, $V_{q_{N},ij}$ have no constant terms.
In the above computation we used the equality $O(-p_{1})O(-p_{2})=O(-p_{1}-p_{2})$, where $p_{1},p_{2}>0$.
And later we will use it implicitly for many times.

The more difficult part of the right hand side of (\ref{equricflatnew}) is $Q(g,\partial g)$ (see (\ref{eqn:nonliearterm})). We divide it into two parts:
\begin{eqnarray*}
 & &Q_{1,ij}:=-\sum_{k,l,p,q} g^{pq}g_{lj}\partial_{p} g_{ik}\partial_{q} g^{kl},\\
 & &Q_{2,ij}:= -\sum_{k,l,p,q,r,s} 2g_{ik}g_{lj} g^{pq}g^{rs}\Gamma^{k}_{pr}\Gamma^{l}_{qs}.
\end{eqnarray*}
 From (\ref{metricformal}) and (\ref{metricformal1}), we have
\begin{eqnarray*}
& &Q_{1,ij}\\
&=& -\sum_{k,l,p,q} (\delta_{pq}+U_{q_{N},pq}+ O(-q_{N}-\epsilon))(\delta_{lj}+U_{q_{N},lj}+ O(-q_{N}-\epsilon))
\partial_{p} g_{ik}\partial_{q} g^{kl}\nonumber\\
&=&-\sum_{k,l,p,q}(\delta_{pq}+U_{q_{N},pq})(\delta_{lj}+U_{q_{N},lj})\partial_{p} g_{ik}\partial_{q} g^{kl}+O(-(q_{N}+2n)-\epsilon).
 \end{eqnarray*}
 In the right hand of the last equality there are two types of terms except the error term, which are
 \begin{eqnarray*}
 -\sum_{k,l,p,q}\delta_{pq}\delta_{lj}\partial_{p} g_{ik}\partial_{q} g^{kl}&:=&
 \delta\star\delta\star\partial g\star\partial g^{-1},\\
 -\sum_{k,l,p,q}U_{q_{N},pq}\delta_{lj}\partial_{p} g_{ik}\partial_{q} g^{kl}&:=&
 U_{q_{N}}\star\delta\star\partial g\star\partial g^{-1},
 \end{eqnarray*}
 where $\star$ denotes the summation on the repeated indices.
 Notice that (\ref{metricformal1}) indicates that both $ \partial g\star\partial g$ and $\partial g\star\partial g^{-1}$
 have the form $ \hat{V}_{q_{N}}\star\hat{V}_{q_{N}}+O(-(q_{N}+n+1)-\epsilon)$.
 Then it is easy to see from (\ref{metricformal}) and (\ref{metricformal1}) that $Q_{1,ij}$ has the form
 \begin{equation}\label{analysisquadratic1}
 \begin{split}
Q_{1,ij}&= M_{q_{N},ij}+O(-(q_{N}+n+1)-\epsilon),\\
\text{where}&\quad M_{q_{N},ij}\in \mathcal{L}_{A\times A},
\end{split}
 \end{equation}
 since $U_{q_{N},ij},\hat{V}_{q_{N},ij}$ have no constant terms.

Because \begin{eqnarray*}
\begin{split}
\Gamma^{k}_{pr}&=\frac{1}{2}g^{km}(\partial_{r} g_{pm}+\partial_{p} g_{mr}-\partial_{m} g_{pr})\\
&=\frac{1}{2}\sum_{k,m=1}^{n}(\delta^{km}+U_{q_{N},km}+O(-q_{N}-\epsilon))(\partial_{r} g_{pm}+\partial_{p} g_{mr}-\partial_{m} g_{pr}),
\end{split}
\end{eqnarray*}
we know from (\ref{metricformal}) and (\ref{metricformal1}) that
 \begin{equation}\label{gamma}
 \begin{split}
 &\Gamma=(\delta+U_{q_{N}})\star\partial g+O(-q_{N}-n-\epsilon),\\
 &\text{and in particular} \quad\Gamma=O(-n).
 \end{split}
 \end{equation}
  Here we have used notation $\star$.
 For $Q_{2}$, again by using (\ref{metricformal}) and (\ref{gamma}), and if we denote
 $x\star x, \,x\star x\star x,\,\cdots$ by $x^{\star 2},\,x^{\star 3},\,\cdots$ respectively,
  then we have
\begin{eqnarray*}
& &Q_{2,ij}\\
&=&-\sum_{k,l,p,q,r,s}2(\delta_{ik}+U_{q_{N},ik}+ O(-q_{N}-\epsilon))\cdots(\delta^{rs}+U_{q_{N},rs}+ O(-q_{N}-\epsilon))\Gamma^{k}_{pr}\Gamma^{l}_{qs} \nonumber\\
&=&-\sum_{k,l,p,q,r,s}2(\delta_{ik}+U_{q_{N},ik})\cdots(\delta^{rs}+U_{q_{N},rs})\Gamma^{k}_{pr}\Gamma^{l}_{qs}
+O(-(q_{N}+2n)-\epsilon)\\
&=&\delta^{\star 4}\star\Gamma^{\star 2}
+U_{q_{N}}\star\delta^{\star 3}\star\Gamma^{\star 2}+\cdots +U_{q_{N}}^{\star 4}\star\Gamma^{\star 2}+O(-(q_{N}+2n)-\epsilon)\nonumber\\
&=&\delta^{\star 4}(\delta+U_{q_{N}})^{\star 2}\star\partial g^{\star 2}
+U_{q_{N}}\star\delta^{\star 3}\star(\delta+U_{q_{N}})^{\star 2}\star\partial g^{\star 2}
+\cdots\\
& &+U_{q_{N}}^{\star 4}\star(\delta+U_{q_{N}})^{\star 2}\star\partial g^{\star 2}+O(-(q_{N}+2n)-\epsilon).
\end{eqnarray*}
Again because of (\ref{metricformal}) and (\ref{metricformal1}), we get that $Q_{2,ij}$ has the form
\begin{eqnarray}\label{analysisquadratic2}
\begin{split}
Q_{2,ij}&=L_{q_{N},ij}+O(-(q_{N}+n+1)-\epsilon),\\
\text{where}&\quad L_{q_{N},ij}\in\mathcal{L}_{A\times A}.
\end{split}
\end{eqnarray}
Finally we conclude by combining (\ref{equricflatnew}), (\ref{nonlinear1}), (\ref{analysisquadratic1}) and (\ref{analysisquadratic2}) that Lemma \ref{struresult} is valid.
\end{proof}

 Lemma \ref{struresult} is the key to make the bootstrapping argument work,
and together with requirement $l\geq j+1$ in the definition of
$ \mathcal{T}$, we can avoid \textbf{exceptional} terms (see (\ref{exceptionaldef})) when $n>4$.

\begin{prop}\label{pruductpro}
Products like $T_{1}\cdot T_{2}$ ($T_{i}\in\mathcal{T},i=1,2$) contain no \textbf{exceptional} terms
 when $n>4$.
And the equation
 \begin{equation*}
\triangle_{\mathbb{R}^{n}}V=T_{1}\cdot T_{2}
 \end{equation*}
on $ \mathbb{R}^{n}\setminus B_{R}$ has a solution in $\mbox{SPAN}(\mathcal{T})$.
\end{prop}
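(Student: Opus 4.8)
The plan is to reduce everything to a single monomial $r^{\sigma'}G_{m'}$ and to track the three integer parameters across multiplication and across $\triangle^{-1}$. Writing $T_i=r^{\sigma_i}G_{m_i}\in\mathcal T$ with $\sigma_i=2j_i-(n-2)l_i-k_i$, $k_i\ge l_i\ge j_i+1$ and $\tfrac{k_i-m_i}{2}\in\mathbb N\cup\{0\}$, the product is $T_1T_2=r^{\sigma_1+\sigma_2}G_{m_1}G_{m_2}$, and by the decomposition (\ref{eqn:decomposition*}) this expands into a finite sum of monomials $r^{\sigma'}G_{m'}$ with $\sigma'=\sigma_1+\sigma_2$ and $0\le m'\le m_1+m_2$. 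First I would record the canonical representation $j'=j_1+j_2$, $l'=l_1+l_2$, $k'=k_1+k_2$ of each such monomial: the relations $k'\ge l'$ and $\tfrac{k'-m'}{2}\in\mathbb N\cup\{0\}$ are immediate, while $l_i\ge j_i+1$ upgrades to the stronger slack $l'=l_1+l_2\ge (j_1+j_2)+2=j'+2$, which is the mechanism behind the whole argument.

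For the first assertion I would estimate $\sigma'+n+m'$ directly. Using $m'\le m_1+m_2\le k_1+k_2=k'$ (the second inequality from $k_i\ge m_i$) together with $l'\ge j'+2$ and $n\ge 4$,
\begin{equation*}
\sigma'+n+m'=2j'-(n-2)l'-k'+n+m'\le 2j'-(n-2)l'+n\le (4-n)(j'+1).
\end{equation*}
For $n>4$ the right side is strictly negative, so $\sigma'+n+m'\ne 0$ and no exceptional term (\ref{exceptionaldef}) occurs. (At $n=4$ the bound degenerates to $\le 0$, which is exactly why the exceptional case, and hence log terms, cannot be excluded in dimension four.)

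With exceptionality ruled out I would solve the Poisson equation term by term. By Lemma \ref{finalset5}, $T_1T_2\in\mbox{SPAN}(\mathcal T)$, so by linearity it suffices to solve $\triangle_{\mathbb R^n}V=r^{\sigma'}G_{m'}$ for each monomial above. Since $\sigma'+n+m'\ne 0$, Proposition \ref{solutionlog} (the first case, with $i=0$) supplies a solution $c\,r^{\sigma'+2}G_{m'}$. It remains to check this lands in $\mathcal T$, and here the slack pays off: $\sigma'+2=2(j'+1)-(n-2)l'-k'$, so with $j''=j'+1$, $l''=l'$, $k''=k'$ one has $k''\ge l''$, $\tfrac{k''-m'}{2}\in\mathbb N\cup\{0\}$, and crucially $l''=l'\ge j'+2=j''+1$. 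Thus $r^{\sigma'+2}G_{m'}\in\mathcal T$, and summing the finitely many such solutions produces $V\in\mbox{SPAN}(\mathcal T)$.

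The main obstacle is not any single calculation but making the parameter bookkeeping airtight: one must see that multiplication adds the triples $(j,l,k)$ while preserving $\tfrac{k-m}{2}\in\mathbb N\cup\{0\}$, and that solving the Poisson equation advances $j\mapsto j+1$ but leaves $l,k$ fixed. The inequality $l\ge j+1$ built into $\mathcal T$ is precisely what survives the multiplication, as $l'\ge j'+2$, so as to absorb the subsequent increment of $j$; verifying this interplay, and confirming that $n>4$ makes the key estimate strict, is the heart of the proof.
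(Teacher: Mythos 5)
Your proposal is correct and follows essentially the same route as the paper: expand $G_{m_1}G_{m_2}$ via the spherical-harmonic decomposition, bound $\sigma'+n+m'$ using $k'\ge m'$ and $l'\ge j'+2$ (your bound $(4-n)(j'+1)$ is exactly the negative of the paper's $(n-4)(j_1+j_2+1)$), then solve termwise with Proposition \ref{solutionlog} and absorb the factor $r^2$ by incrementing $j$ while keeping $l,k$ fixed. The parameter bookkeeping, including the verification that $\tfrac{k'-m'}{2}\in\mathbb N\cup\{0\}$ and that $l'\ge j''+1$ after the increment, matches the paper's argument step for step.
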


\begin{proof}
Thanks to (\ref{eqn:decomposition*}),
 \begin{eqnarray}\label{product}
 T_{1}\cdot T_{2}
 &=&r^{\sigma_{1}}G_{m_{1}}r^{\sigma_{2}}G_{m_{2}}\nonumber \\
 &=&r^{\sigma_{1}+\sigma_{2}}(G_{m_{1}+m_{2}}+G_{m_{1}+m_{2}-2}+\cdots).
 \end{eqnarray}
 By definition of $\mathcal{T},$ $\sigma_{1}+\sigma_{2}=2(j_{1}+j_{2})-(n-2)(l_{1}+l_{2})-k_{1}-k_{2},$
  and
  \begin{equation}\label{poweradd}
  k_{1}+k_{2}\geq( m_{1}+m_{2}),\,k_{1}+k_{2}\geq l_{1}+l_{2}\geq j_{1}+j_{2}+2,
  \end{equation}
   therefore we have
  \begin{equation}\label{powercomparison}
   -(\sigma_{1}+\sigma_{2})-(m_{1}+m_{2}+n)\geq2(n-2)+(n-4)(j_{1}+j_{2})-n>0,
  \end{equation}
  when $n\geq5$.
(\ref{powercomparison}) indicates that there are no \textbf{exceptional} terms in (\ref{product}) because $r^{\sigma}G_{m}$ is
exceptional if and only if $\sigma+n+m=0$.

 Now we have proved that every term in the right hand side of (\ref{product}), which takes the form
\begin{equation*}
r^{2(j_{1}+j_{2})-(n-2)(l_{1}+l_{2})-(k_{1}+k_{2})}G_{\beta}:=f_{\beta},
\end{equation*}
is not exceptional,
where $\beta=m_{1}+m_{2},m_{1}+m_{2}-2,\cdots,{m_{1}+m_{2}-2[\frac{m_{1}+m_{2}}{2}]}$.
And from the definition of $\mathcal{T},$ we have
\begin{equation}\label{powcomp2}
\frac{k_{1}+k_{2}-\beta}{2}\in \mathbb{N}\cup \set{0}.
\end{equation}
Then
for each $\beta$, by Proposition \ref{solutionlog} the equation
  \begin{equation*}
\triangle_{\mathbb{R}^{n}}V=f_{\beta}
 \end{equation*}
 has a solution
 \begin{equation*}
V_{\beta}=c_{\beta}r^{2}\cdot f_{\beta},
 \end{equation*}
 where $c_{\beta}$ is a real number.
 Set $j=j_{1}+j_{2}+1,l=l_{1}+l_{2},k=k_{1}+k_{2}$, we know easily that the above solution,
 $$V_{\beta}= c_{\beta}r^{2j-(n-2)l-k}G_{\beta},$$
 lies in $\text{SPAN}(\mathcal{T}),$
 since $k\geq l\geq j+1$ (see (\ref{poweradd})), and $\frac{k-\beta}{2}\in \mathbb{N}\cup \set{0}$ (see (\ref{powcomp2})).
 Thus the second part of the proposition is right.
\end{proof}

When $n=4$, in spite of the existence of log terms, we still have
\begin{prop} \label{product4}
The equation
\begin{equation*}
\triangle_{\mathbb{R}^{n}}V=T_{1}\cdot T_{2}
 \end{equation*}
on $ \mathbb{R}^{n}\setminus B_{R}$ has a solution in $\mbox{SPAN}(\widetilde{\mathcal{T}})$, where $T_{i}\in\widetilde{\mathcal{T}},i=1,2$.

\end{prop}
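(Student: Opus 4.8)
The plan is to follow the template of the proof of Proposition \ref{pruductpro}: expand the product $T_1\cdot T_2$ into spherical harmonic components, solve the resulting single-term Poisson equations by Proposition \ref{solutionlog}, and check that the solutions lie in $\text{SPAN}(\widetilde{\mathcal{T}})$. The genuinely new feature is that in dimension $n=4$ the \textbf{exceptional} case (\ref{exceptionaldef}) really does occur, so I must control the extra $\log r$ factor it produces.

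First I would write $T_s=r^{\sigma_s}(\log r)^{i_s}G_{m_s}$ with $\sigma_s=-2l_s-k_s$ for $s=1,2$, and use (\ref{eqn:decomposition*}) to get
\[
T_1\cdot T_2=r^{\sigma_1+\sigma_2}(\log r)^{i_1+i_2}\left(G_{m_1+m_2}+G_{m_1+m_2-2}+\cdots\right),
\]
so that it suffices to solve $\triangle_{\mathbb{R}^4}V=f_\beta$ for each component $f_\beta=r^{\sigma}(\log r)^{i}G_\beta$, where $\sigma=-2(l_1+l_2)-(k_1+k_2)$, $i=i_1+i_2$, and $\beta$ runs through $m_1+m_2,\,m_1+m_2-2,\dots$ down to $0$ or $1$.

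Next I would apply Proposition \ref{solutionlog} to each $f_\beta$. If $f_\beta$ is not exceptional, the solution is $r^{\sigma+2}(\sum_{j=0}^{i}c_j(\log r)^j)G_\beta$; if it is exceptional ($\sigma+4+\beta=0$), the solution is $r^{\sigma+2}(\sum_{j=0}^{i+1}c_j(\log r)^j)G_\beta$, carrying one additional power of $\log r$. In both cases I set $l':=l_1+l_2-1$ and $k':=k_1+k_2$, so that $\sigma+2=-2l'-k'$, and I verify the membership conditions of $\widetilde{\mathcal{T}}$: since $l_1,l_2\geq1$ we get $l'\geq1$; and since each $\tfrac{k_s-m_s}{2}\in\mathbb{N}\cup\set{0}$ while $m_1+m_2-\beta$ is a nonnegative even integer, also $\tfrac{k'-\beta}{2}\in\mathbb{N}\cup\set{0}$. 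Summing the resulting $V_\beta$ over all components and using linearity of $\triangle_{\mathbb{R}^4}$ then produces a solution of $\triangle_{\mathbb{R}^4}V=T_1\cdot T_2$ lying in $\text{SPAN}(\widetilde{\mathcal{T}})$.

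The crux is the remaining constraint $k'\geq l'+j$ for every log power $j$ in the solution, and this is exactly where the exceptional case must be absorbed. In the non-exceptional case $j\leq i_1+i_2$, and $k_s\geq l_s+i_s$ gives $k'=k_1+k_2\geq(l_1+l_2)+(i_1+i_2)=l'+1+(i_1+i_2)>l'+j$, with one unit to spare. In the exceptional case $j$ reaches $i_1+i_2+1$, and that spare unit is precisely what is needed: $k'\geq l'+1+(i_1+i_2)=l'+(i_1+i_2+1)\geq l'+j$. I expect this inequality-chasing to be the only real obstacle, and it explains why the defining inequality of $\widetilde{\mathcal{T}}$ was taken to be $k\geq l+i$ together with $l\geq1$ — the ``$+1$'' freed up by lowering $l$ to $l'=l_1+l_2-1$ absorbs exactly the extra logarithm forced by the exceptional Poisson solution.
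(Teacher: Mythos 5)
Your proposal is correct and follows essentially the same route as the paper's own proof: decompose $T_1\cdot T_2$ via (\ref{eqn:decomposition*}), solve each component with Proposition \ref{solutionlog}, and relabel $l'=l_1+l_2-1$ (the paper's $\tilde l=l-1$) so that the one unit gained from $l_1+l_2\geq 2$ absorbs the extra power of $\log r$ produced by the exceptional case. The inequality chase $k'\geq l'+(i_1+i_2+1)$ is exactly the paper's verification that the resulting terms satisfy the defining conditions of $\widetilde{\mathcal{T}}$.
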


\begin{proof}
Suppose $T_{j}=r^{\sigma_{j}}(\log r)^{i_{j}}G_{m_{j}}\in \widetilde{\mathcal{T}}$ ($j=1,2$), where $\sigma_{j}=-2l_{j}-k_{j}$, then from (\ref{4set}) we have $ l_{j}\geq1,k_{j}\geq l_{j}+i_{j},$ and $\frac{k_{j}-m_{j}}{2}\in\mathbb{N}\cup\set{0}.$
Next by (\ref{eqn:decomposition*})
\begin{equation}\label{tiltaproduct}
T_{1}\cdot T_{2}=r^{-2l-k}(\log r)^{i}(G_{m}+G_{m-2}+\cdots+G_{m-2[\frac{m}{2}]}),
\end{equation}
where $l=l_{1}+l_{2},k=k_{1}+k_{2},i=i_{1}+i_{2},m=m_{1}+m_{2}$ and $[\frac{m}{2}]$ is the largest integer less than $\frac{m}{2}$.
Obviously from above we have
\begin{eqnarray}\label{coefficientsrelation}
l\geq2, k\geq l+i, \frac{k -m} {2}\in\mathbb{N}\cup\set{0}.
\end{eqnarray}
The relations $ l\geq2$ and $ k\geq l+i$ in (\ref{coefficientsrelation}) indicate that each term of the form $T_{1}\cdot T_{2}$ is in a proper subset of
$\text{SPAN}(\widetilde{\mathcal{T}})$. It can be roughly said that the difference between $k$ and $i$ increases at least by one after doing multiplication.

By Proposition \ref{solutionlog} for each term $r^{-2l-k}(\log r)^{i}G_{s}=f_{s}$ ($s=m,m-2,\cdots,m-2[\frac{m}{2}]$) in (\ref{tiltaproduct}),
\begin{equation*}
\triangle_{\mathbb{R}^{n}}u= f_{s}
\end{equation*}
has a solution of the form
\begin{equation}\label{solution41}
r^{-2l-k+2}(\sum_{j=0}^{i}c_{j}(\log r)^{j})G_{s},c_{j}\in \mathbb{R}
\end{equation}
if $s+n\neq 2l+k$, that is $s+4\neq 2l+k$;
and of the type of
\begin{equation}\label{solution42}
r^{-2l-k+2}(\sum_{j=0}^{i+1}c_{j}(\log r)^{j})G_{s},c_{j}\in \mathbb{R}
\end{equation}
if $s+4= 2l+k$.
In fact this can happen only when $l=2$ and $k=s$, since $l\geq2, k\geq s .$
One observation is that when solving the Poisson equations above, the power of $\log r$ increases at most one.

(\ref{coefficientsrelation}), (\ref{solution41}) and (\ref{solution42}) show that
\begin{equation*}
\triangle_{\mathbb{R}^{n}}u= T_{1}\cdot T_{2}
\end{equation*}
has a solution which is a finite linear combination of terms taking the form of
\begin{equation*}
r^{-2l-k+2}(\log r)^{j}G_{s},
\end{equation*}
where $j\leq i+1,l\geq2,k\geq i+l,$ and $ i,j,l,k,s,\frac{k-s}{2}\in\mathbb{N}\cup\set{0}.$
Let $\tilde{l}=l-1$, thereby $ \tilde{l}\geq1$. Next it's straightforward to check each term $r^{-2\tilde{l}-k}(\log r)^{j}G_{s}$ in the last formula
satisfies the conditions
\begin{eqnarray*}
\tilde{l}\geq1, k\geq j+\tilde{l},\quad j,\tilde{l},k,s,\frac{k-s}{2}\in\mathbb{N}\cup\set{0}
\end{eqnarray*}
 in (\ref{4set}), thus they are all in $\widetilde{\mathcal{T}}$. This results in the validity of the proposition.
\end{proof}

\section{Proof Of The Main Results}\label{sec:pro}

With all the preparations, we are at the position to prove our main results. First we give the proof of Theorem \ref{MAINTHM}. The second part is from Remark \ref{remaftermain}, so we only need to prove the first part.

\begin{proof}
We argue by induction.
We first prove the case $n>4$.
 At the beginning, due to Remark \ref{rem1thexpansion} we have an expansion of the form $g_{ij}-\delta_{ij}=a_{1,ij}r^{1-n}G_{1}+O(1-n-\epsilon)$.
 Notice that $r^{-1-(n-2)}G_{1}\in\mathcal{T}_{1}\subset \mathcal{T}$, hence $g_{ij}-\delta_{ij}$ has an expansion up order $-q_{1}=1-n $ in the sense of Definition \ref{defexpansionintr}, this is the starting point of the induction. Here we recall we have defined that $- q_{N}=N(1-n)$ at the beginning of Subsection \ref{structure}.

 Next assume $g_{ij}-\delta_{ij}$ has an expansion up order $-q_{N}, N\in\mathbb{N}$ with term in
$\mathcal{T}$ in the sense of Definition \ref{defexpansionintr}, then by using Lemma \ref{struresult}, Proposition \ref{pruductpro} and Lemma \ref{corpoissonestimate4}, we know the equation
 \begin{equation*}
\triangle _{\mathbb{R}^{n}}u=RHS_{ij}=RHS_{q_{N},ij}+O(-q_{N}-n-1-\epsilon)
\end{equation*}
on $ \mathbb{R}^{n}\setminus B_{R}$
has a solution $\hat{U}_{q_{N+1},ij} +O(-q_{N}-n+1-\epsilon)$, where $\hat{U}_{q_{N+1},ij}\in \text{SPAN}(\mathcal{T})$.
On account of (\ref{equricflatnew}) and the fact $g_{ij}-\delta_{ij}\in \mathcal{C}^{\infty}_{1-n}$,
\begin{equation*}
 g_{ij}-\delta_{ij}-(\hat{U}_{q_{N+1},ij} +O(-q_{N}-n+1-\epsilon)):=H_{q_{N+1}}
 \end{equation*}
 is of order $r^{1-n}$ and is harmonic with respect to $\triangle _{\mathbb{R}^{n}}$.
 So $ H_{q_{N+1}}\in \text{SPAN}(\mathcal{T}_{1})\subset \text{SPAN}(\mathcal{T})$ by Lemma \ref{lem:harmonic expansion}.
 Therefore if we set
 \begin{equation*}
 U_{q_{N+1},ij}:=\hat{U}_{q_{N+1},ij} +H_{q_{N+1}},
 \end{equation*}
 then we get a better expansion in terms of $\mathcal{T}$ up to order $-q_{N+1}$ in the sense of Definition \ref{defexpansionintr} for $g_{ij}-\delta_{ij}$, which is
 \begin{equation*}
 g_{ij}-\delta_{ij}=U_{q_{N+1},ij}+O(-q_{N+1}-\epsilon).
 \end{equation*}
 Here we have used the definition $-q_{N}=N(1-n). $
 Finally we conclude that $g_{ij}-\delta_{ij}$ has an expansion up to any order in terms of
$\mathcal{T}$ in the sense of Definition \ref{defexpansionintr} by taking larger and larger $N$.

When $n=4$, since $r^{-1-(n-2)}G_{1}\in\mathcal{T}_{1}\subset \widetilde{\mathcal{T}}$, we still have
 the starting point of the induction.
 With $\widetilde{\mathcal{T}}$ in place of $\mathcal{T}$, and Proposition \ref{product4} in place of
 Proposition \ref{pruductpro}, the induction argument can work just as in the case $n>4$.

\end{proof}

Before the proof of Corollary \ref{coreven}, we recall some basic properties of the Kelvin transformation. Obviously it is a linear operator.
And it can be checked directly that
\begin{equation}
	\begin{split}
K[1]&=|x|^{2-n},\\
K[r^\sigma G_{m}]&=r^{2-n-\sigma}G_{m},\\
K[O(r^{-q-\epsilon})]&=O(r^{q+2-n+\epsilon}).
	\end{split}
	\label{kelvin}
\end{equation}
First from (\ref{kelvin}) and Remark \ref{remaftermain}, we can deduce that for all $n\geq 4$,
$K[g_{ij}-\delta_{ij}]$ has an expansion of the form
\begin{equation}\label{firsthar}
K[g_{ij}-\delta_{ij}]=H_{n-1} +o(r^{n-1}),
\end{equation}
where $H_{n-1}$ is a harmonic polynomial (without constant term) of degree $\leq n-1 .$
In particular, $K[g_{ij}-\delta_{ij}]$ is $\mathcal{C}^{n-1}$ differentiable on $B_{\frac{1}{2R}}$.

From now on assume that $n>4$ and $g_{ij}$ has the expansion up order $q$ as given in Theorem \ref{MAINTHM}, then
\begin{eqnarray*}
K[g_{ij}]&=&K[\delta_{ij}]+K[U_{q,ij}]+K[O(-q-\epsilon)]\\
&=&|x|^{2-n}\delta_{ij}+K[U_{q,ij}]+O(r^{q+2-n+\epsilon}),
\end{eqnarray*}
where $q\in\mathbb{N}$, $q\geq n-1$ and $U_{q,ij}\in \text{SPAN}(\mathcal{T})$.

Now we prove Corollary \ref{coreven}.

\begin{proof}
Because $n$ is even, for every $r^\sigma G_{m}\in \mathcal{T}$, we have $2-n-\sigma-m$ is even
and $r^{2-n-\sigma-m}=(z^{2}_{1}+\cdots+z^{2}_{n})^{\frac{2-n-\sigma-m}{2}}$ is a polynomial, which implies that
$r^{2-n-\sigma}G_{m}$ is also a polynomial by the definition of $G_m$.
So the second equation in (\ref{kelvin}) implies that $K[U_{q,ij}]$ is polynomial of degree $\leq q+2-n$.
And (\ref{firsthar}) infers $K[g_{ij}-\delta_{ij}](0)=0$, thus the proof is finished.
\end{proof}

\bibliographystyle{plain}
\bibliography{foo}

\end{document}